\documentclass[11pt]{amsart}

\usepackage{epigamath}
\usepackage{microtype}


\usepackage[english]{babel}


\numberwithin{equation}{section}



\usepackage{mathtools}

\usepackage{framed}
\usepackage{color}

\usepackage{mathrsfs}
\usepackage{cite}
\usepackage{enumerate}
\usepackage{caption}

\usepackage[all,2cell]{xy} \SilentMatrices
\usepackage{epsfig,color}



\theoremstyle{plain}
\newtheorem{theorem}{Theorem}[section]
\newtheorem*{lemma*}{Lemma}
\newtheorem{lemma}[theorem]{Lemma}
\newtheorem*{theorem*}{Theorem}
\newtheorem{proposition}[theorem]{Proposition}
\newtheorem*{proposition*}{Proposition}
\newtheorem{corollary}[theorem]{Corollary}
\newtheorem*{corollary*}{Corollary}

\newtheorem*{thmA}{Theorem A}
\newtheorem*{thmB}{Theorem B}

\theoremstyle{definition}
\newtheorem{remark}[theorem]{Remark}
\newtheorem*{remark*}{Remark}
\newtheorem*{definition*}{Definition}

\newtheorem*{example*}{Example}

\newtheorem{question}[theorem]{Question}
\newtheorem*{question*}{Question}


\def\H{\operatorname{H}}

\def\c1{\operatorname{c_1}}
\def\c2{\operatorname{c_2}}

\def\ZZ{{\mathbb Z}}

\def\PP{{\mathbb P}}

\def\O{{\mathcal O}}

\def\+{\oplus}                   
\def\*{\otimes}

\def\Gr{\operatorname{Gr}}

\def\mov{\overline{\operatorname{Mov}}}

\def\eff{\overline{\operatorname{Eff}}}
\def\nef{\operatorname{Nef}}

\def\Pic{\operatorname{Pic}}

\def\Sym{\operatorname{S}}

\def\tH{\widetilde{H}}
\def\HC{\operatorname{HC}}
\def\HH{\textrm{H}}


\EpigaVolumeYear{4}{2020}
\EpigaArticleNr{8}
\ReceivedOn{November 28,
2019}
\InFinalFormOn{March 14, 2020}
\AcceptedOn{April 22, 2020}

\title{Remarks on the positivity of the cotangent bundle of a K3 surface}
\titlemark{Positivity of the cotangent bundle of a K3}

\author{Frank Gounelas}
\address{TU M\"unchen, Zentrum Mathematik - M11, Boltzmannstr. 3, 85748 Garching M\"unchen, Germany}
\email{gounelas@ma.tum.de}

\author{John Christian Ottem}
\address{Department of Mathematics, University of Oslo, Box 1053, Blindern, 0316 Oslo, Norway}
\email{johnco@math.uio.no}

\authormark{F. Gounelas and J. C. Ottem}

\AbstractInEnglish{Using recent results of Bayer--Macr\`i, we compute in many cases the pseudoeffective and nef cones of the projectivised
cotangent bundle of a smooth projective K3 surface. We then use these results to construct explicit families of smooth curves on which the
restriction of the cotangent bundle is not semistable (and hence not nef). In particular, this leads to a counterexample
to a question of Campana--Peternell.}

\MSCclass{14J28 
14C20 
14J42 
}

\KeyWords{K3 surface; cotangent bundle; positivity; cones of divisors}

\TitleInFrench{Remarques sur la positivit\'e du fibr\'e cotangent d'une surface K3}

\AbstractInFrench{En utilisant des r\'esultats r\'ecents de Bayer--Macr\`i, nous calculons dans de nombreux cas le c\^one pseudoeffectif et le c\^one nef du projectivis\'e du fibr\'e cotangent d'une surface K3 projective. Nous utilisons ensuite ces r\'esultats pour construire des familles explicites de courbes lisses sur lesquelles la restriction du fibr\'e cotangent n'est pas semi-stable (et n'est donc pas nef). En particulier, cela fournit une r\'eponse n\'egative \`a une question de Campana--Peternell.}


\acknowledgement{The first author acknowledges the support of the ERC Consolidator Grant 681838 K3CRYSTAL.}




\begin{document}



\maketitle

\begin{prelims}

\DisplayAbstractInEnglish

\bigskip

\DisplayKeyWords

\medskip

\DisplayMSCclass

\bigskip

\languagesection{French}

\bigskip

\DisplayTitleInFrench

\medskip

\DisplayAbstractInFrench

\end{prelims}


\newpage

\setcounter{tocdepth}{2}

\tableofcontents


\section{Introduction}

Miyaoka proved that the cotangent bundle of a non-uniruled variety is {generically nef}, in the sense that its restriction to a sufficiently ample and general complete intersection curve is a nef vector bundle \cite{mehtaramanathan}. This in turn has many interesting geometric consequences; see \cite{campanapeternell,peternell2011generically} for more general properties of such vector bundles, in particular for the tangent and cotangent bundles. The starting point of the present paper was the following question of Campana and Peternell:

\begin{question}[Question 1.6 in \cite{campanapeternell}]
Let $f:X\to Y$ be a birational morphism of smooth projective non-uniruled varieties. Is $f^*\Omega^1_Y$ generically nef?
\end{question}

We answer this question in the negative, by constructing a certain blow-up $f:X\to S$ of a K3 surface. Let us sketch the construction. Let $S$ be a generic quartic surface and let $\PP(\Omega^1_S)$ denote the projectivisation of the cotangent bundle. Let $L=\O_{\PP(\Omega_S^1)}(1)$ be the relative hyperplane bundle and $H$ the pullback of the polarisation on $S$. Then one finds that $D=L+2H$ is a base-point free ample divisor (see Proposition \ref{coneprojspace}), so a generic element $X\in |D|$ is a smooth surface and the projection $\pi:X\to S$ is birational. Moreover, as $L\cdot D^2<0$, the canonical quotient $\pi^*\Omega^1_S\to L\to 0$, shows that $\pi^*\Omega^1_S$ is not nef restricted to any smooth curve $C\subset X$ in the (ample) linear system $|mD|_X|$. In particular $\pi^*\Omega^1_S$ is not generically nef.

This example has other interesting features. For instance, the curve $C$ maps isomorphically to a (movable) smooth curve $D\subset S$ with the property that $\Omega^1_S|_D$ is not nef, and hence not stable. This addresses \cite[Example 4.3]{campanapaun}, which asks about explicit examples of curves destabilising the cotangent bundle of a K3. We show more generally the following:

\begin{thmA}
	Let $(S,\O_S(1))$ be a general polarised K3 surface of degree $d$. 
    \begin{enumerate}[$(i)$]
    	\item If $d=2$ then there is a positive-dimensional family of smooth curves $C$ in $|\O_S(6)|$ so that $\Omega^1_S|_C$ is not semistable.
        \item If $d=4,6$ there are families as above in $|\O_S(3)|$ $($and another in $|\O_S(4)|$ if $d=4)$.
        \item If $d=8$ there is a family in $|\O_S(3)|$ so that $\Omega^1_S|_C$ is strictly semistable.
    \end{enumerate}
\end{thmA}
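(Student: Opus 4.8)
The plan is to translate the (semi)stability of $\Omega^1_S|_C$ into a statement about curves on the threefold $Y := \PP(\Omega^1_S)$, and then to pin down the relevant curves using the nef and pseudoeffective cones of $Y$. Writing $\pi\colon Y\to S$, $L = \O_{\PP(\Omega_S^1)}(1)$ and $H = \pi^*\O_S(1)$, I would first record the intersection numbers $L^3 = -\c2(\Omega^1_S) = -24$, $L^2H = 0$, $LH^2 = d$, $H^3 = 0$, together with $K_Y = -2L$ (since $\Omega^1_S$ has trivial determinant and $\rank = 2$). As $\det\Omega^1_S = \O_S$, the restriction $\Omega^1_S|_C$ has slope $0$ on every curve $C$, so a \emph{destabilising} sub-line-bundle $M\subset\Omega^1_S|_C$ of positive degree is exactly the datum of a section $\tilde C\hookrightarrow Y$ of $\pi$ over $C$, namely the graph of the quotient $\Omega^1_S|_C\twoheadrightarrow N := (\Omega^1_S|_C)/M$, for which $L\cdot\tilde C = \deg N = -\deg M < 0$; strict semistability is the borderline $L\cdot\tilde C = 0$. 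Thus the whole statement reduces to producing, for each $d$, sections $\tilde C$ over moving curves $C\in|\O_S(k)|$ with $L\cdot\tilde C\le 0$, with strict inequality in all cases except $d=8$.

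These sections I would build from twisted holomorphic $1$-forms. A divisor $X\in|L+cH|$ is the same as a section $\omega\in H^0(S,\Omega^1_S(c))$; for general $\omega$ the surface $X$ is smooth and $\pi|_X$ is the blow-up $\sigma$ of $S$ at the $N = c^2d + 24 = \c2(\Omega^1_S(c))$ zeros $p_1,\dots,p_N$ of $\omega$, so that $K_X = \sum_i e_i$ and adjunction $K_X = (K_Y+X)|_X$ gives $L|_X = c\,\sigma^*\O_S(1) - \sum_i e_i$. If a curve $C\in|\O_S(k)|$ passes through the zeros $\{p_i : i\in I\}$, then $X\cap\pi^{-1}(C)$ breaks up into the exceptional fibres over $I$ and a residual section $\tilde C$ over $C$, and a Chern-class computation yields
\[
L\cdot\tilde C = c\,k\,d - |I|.
\]
Hence $\Omega^1_S|_C$ is destabilised once $|I| > ckd$ and is strictly semistable once $|I| = ckd$. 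The families with $k=2c$ and $I$ all of the zeros recover the even-degree systems of the introduction (here $L\cdot\tilde C = c^2d-24$, and the sections of the system come from $H^0(Y,L+cH)=H^0(\Omega^1_S(c))$), giving $|\O_S(6)|$ for $d=2$ (take $c=3$) and $|\O_S(4)|$ for $d=4$ (take $c=2$); the odd systems $|\O_S(3)|$ use a proper subset $I$ of the zeros. Existence of $\omega$ for small $c$ is supplied by the projective geometry of the primitive embedding $S\hookrightarrow\PP^{d/2+1}$: restricting the skew forms in $H^0(\Omega^1_{\PP^{d/2+1}}(2))$ produces $\omega\in H^0(\Omega^1_S(2))$, so one may take $c=2$ for $d=4,6,8$ (and a separate twisted form on the double plane for $d=2$).

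To carry this out for the \emph{stated} systems, and in particular to distinguish the strictly semistable case from genuine instability, I must know the exact extremal value of $L\cdot\tilde C$ among all sections over curves of a given degree; equivalently I must compute the Mori cone of $Y$ together with $\nef(Y)$ and $\eff(Y)\subset\langle L,H\rangle$. This is the point at which the Bayer--Macr\`i machinery enters: the walls are governed by the spherical and isotropic classes of the algebraic Mukai lattice of $S$, and they fix the threshold $c$ for which $L+cH$ is nef, hence the most $L$-negative effective curve class at each degree. Running this case by case should match the claimed systems: a family in $|\O_S(6)|$ for $d=2$, families in $|\O_S(3)|$ and $|\O_S(4)|$ for $d=4$, and one in $|\O_S(3)|$ for $d=6$, while for $d=8$ the extremal value at degree $3$ comes out to be exactly $L\cdot\tilde C = 2\cdot 3\cdot 8 - 48 = 0$ (so $|I|=48\le N=56$), giving strict semistability rather than instability.

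The hard part will be precisely this cone computation together with the positivity of the resulting linear systems. The naive expected dimension of the system of degree-$k$ curves through $|I|>ckd$ of the $p_i$ is negative --- for the $k=3$ cases $\chi = 2 + \tfrac{9d}{2} - |I| < 0$ --- so the existence of a \emph{positive-dimensional} family of such curves is not formal and hinges on the special position of the zeros of $\omega$, which is exactly what the wall-crossing/Bayer--Macr\`i description controls. The most delicate point is $d=8$, where one must show that the extremal section has $L\cdot\tilde C$ equal to $0$ and not negative --- that a degree-$3$ curve meets at most $48$ of the $56$ zeros --- so that $\Omega^1_S|_C$ is strictly semistable but not unstable. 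Once the family is in hand, verifying that a general member $\tilde C$ is a smooth irreducible curve mapping isomorphically onto a smooth $C\in|\O_S(k)|$, so that $\pi^*\Omega^1_S|_{\tilde C}=\Omega^1_S|_C$, should be a routine Bertini argument on the blow-up $X$.
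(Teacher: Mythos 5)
Your reduction of (semi)stability to sections $\tilde C\subset\PP(\Omega^1_S)$ with $L\cdot\tilde C\le 0$, and your even-degree construction, are exactly the paper's argument for the $|\O_S(6)|$ family at $d=2$ and the $|\O_S(4)|$ family at $d=4$: the paper also takes a smooth $X\in|L+cH|$ (basepoint-free via global generation of $\Omega^1_S(c)$), identifies $\pi|_X$ as a blow-up at $c^2d+24$ points ($42$, resp.\ $40$), and uses $L\cdot(L+cH)^2=c^2d-24<0$; your numbers match verbatim. The genuine gap is in the $|\O_S(3)|$ families for $d=4,6,8$, i.e.\ the heart of parts (ii) and (iii). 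Within your mechanism these sections would have class $3LH+(6d-|I|)f$ ($f$ the fibre class), and matching the required $L$-degree forces a cubic through $|I|=3d+24$ of the zeros of $\omega\in\HH^0(\Omega^1_S(2))$ --- $36$, $42$, $48$ points against $\dim|\O_S(3)|=1+\tfrac{9d}{2}=19,28,37$ --- a wildly negative expected dimension, as you concede. You defer existence to ``the'' Bayer--Macr\`i Mori-cone computation, but this step fails twice over. First, Bayer--Macr\`i compute divisor cones on $S^{[2]}$, and the paper proves that precisely in degrees $4$ and $6$ the restriction of $\nef(S^{[2]})$ to $E$ is \emph{not} extremal (the second contraction restricts to a finite map on $E$); the paper only obtains bounds $\sqrt2<\alpha_{n,4}<\tfrac32$ and $\tfrac65<\alpha_{n,6}<\tfrac43$ and explicitly leaves the exact nef cone of $E$ as an open question, so the threshold you plan to read off is not available. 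Second, even granting the cones, they would not single out these curves: the relevant section class pairs strictly positively with both boundary nef divisors (e.g.\ for $d=4$, $(L+\alpha H)\cdot(3LH-12f)=12\alpha-12>0$ since $\alpha_{n,4}>\sqrt2$), so the destabilising sections are \emph{interior}, not extremal, classes.

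The missing idea is classical projective geometry rather than wall-crossing. The paper takes $C=R_\Lambda$, the ramification divisor of a generic projection $S\to\PP^2$ from a linear space $\Lambda=\PP^{t-1}\subset\PP^{t+1}$, which by Ciliberto--Flamini is a smooth irreducible member of $|\O_S(3)|$, moving in a positive-dimensional family as $\Lambda$ varies. The corresponding section is cut by the Schubert cycle $\sigma_2$ of lines meeting $\Lambda$ under $\phi:\PP(\Omega^1_S)\to\operatorname{Gr}(2,t+2)$; the Grothendieck relation gives $\phi^*\sigma_2=L^2+3L\cdot H+3H^2$, hence $L\cdot\phi^*\sigma_2=6t-24$: negative for $t\le 3$, zero for $t=4$. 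Note this class corresponds formally to ``$c=1$, $|I|=24$'' in your parametrisation, i.e.\ it would require $\omega\in\HH^0(\Omega^1_S(1))$ --- but $\chi(\Omega^1_S(1))=d-20<0$, and for $d=4$ the computed cone $\eff(E)=\langle H,\,3L+4H\rangle$ shows $L+H$ is not even pseudoeffective --- so these sections provably do not arise from your twisted-form divisors, which is why the Grassmannian construction is unavoidable. Finally, your ``most delicate point'' at $d=8$ is a non-issue: once a single degree-zero quotient exists, $\Omega^1_S|_C$ is an extension of two degree-zero line bundles (trivial determinant), hence automatically semistable and not stable; the delicate point is the existence of the section itself, which again comes from $\phi^*\sigma_2$ at $t=4$, not from bounding how many zeros a cubic can meet.
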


These results should be compared with the results of Bogomolov and Hein (see Theorem \ref{hein}), which essentially say that if either the degree of the polarisation or the multiple of the polarisation in which the curve lies is high, then there can be no smooth destabilising curves, and also that a general curve in any multiple of the polarisation is not destabilising.

More generally, one can ask how the sets of all curves and effective divisors on $\PP(\Omega^1_S)$ reflect the geometric
properties of $S$. In particular this is closely related to describing the nef and effective cones on $\PP(\Omega^1_S)$,
and in turn the existence of sections of the twisted symmetric differentials $\Sym^a(\Omega^1_S)\otimes \O_S(b)$. These
subjects have a long history, going back to the work of Kobayashi\cite{kobayashi}, who showed that a simply connected
Calabi--Yau variety has no symmetric differentials, i.e., $\HH^0(X,\Sym^m\Omega^1_X)=0$ for all $m\ge 1$. For K3 surfaces, this was extended by Nakayama \cite{nakayama}, who showed that in fact $L=\O_{\PP(\Omega^1_S)}(1)$ is not even {pseudoeffective} on $\PP(\Omega^1_S)$ (i.e., its class is not a limit of effective classes). Very recently this was extended to simply connected Calabi--Yau threefolds by Druel \cite[Theorem 6.1]{druelcy3} and by H\"oring--Peternell \cite{hoeringpeternell} to all dimensions.

In general computing these cones explicitly seems like a difficult problem. In some low-degree cases one can use the well-known projective models to study the cohomology of the symmetric powers of the cotangent bundle, but in general the cones seem to depend on the degree $d$ in a rather subtle way. We are able to solve the problem at least for infinitely many $d$, using results of Bayer--Macr\`i.

The starting observation is that $\PP(\Omega^1_S)$ embeds in the Hilbert scheme $S^{[2]}$ as the exceptional divisor of
the Hilbert--Chow morphism, so one can try restricting extremal divisors from the pseudoeffective and nef cones of
$S^{[2]}$ to $\PP(\Omega^1_S)$, which are known from the results of Bayer--Macr\`i \cite{bayermacri}: the effective cone
$\eff(S^{[2]})$ (resp.\ nef cone) is spanned by $B$ (resp.\ $\widetilde{H}$) and one more extremal ray which in what follows
we call the \textit{second extremal ray} (see Theorem \ref{bm}). We prove that
the restriction of this second ray is indeed extremal for infinitely many $d$, but also that this is not always the case. In what
follows, we will consider the Pell-type equation \begin{equation}\label{Pell}x^2-4ty^2 = 5.\end{equation}

\begin{thmB}[See Section \ref{sectioncones}] Let $(S,\O_S(1))$ be a primitively polarised K3 surface of  degree $d=2t$ and Picard number one.
First, if $t$ is a square and \eqref{Pell} has no solution, then 
$$ \eff(\PP(\Omega^1_S)) = \nef(\PP(\Omega^1_S)) = \left\langle H, L+\frac{2}{\sqrt{t}}H\right\rangle.$$
Next, the restriction of the second extremal ray of $\eff(S^{[2]})$ is extremal on $\eff(\PP(\Omega^1_S))$ if the following three conditions hold
\begin{enumerate}[$(i)$]
	\item $t$ is not a square, 
	\item \eqref{Pell} has no solutions, 
	\item the minimal solution $(a,b)$ of $x^2-ty^2=1$ has $b$ even.
\end{enumerate}
Finally, the restriction of the second extremal ray of $\nef(S^{[2]})$ is extremal on $\nef(\PP(\Omega^1_S))$ if \eqref{Pell} has a solution.
\end{thmB}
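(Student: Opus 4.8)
The plan is to realise $\PP(\Omega^1_S)$ as the Hilbert--Chow exceptional divisor $E\subset S^{[2]}$, read off the two extremal rays of $\eff(S^{[2]})$ and $\nef(S^{[2]})$ from Bayer--Macr\`i, and push them onto $E$. Write $\Pic(S^{[2]})=\ZZ h\oplus\ZZ\delta$ with Beauville--Bogomolov form $h^2=2t$, $\delta^2=-2$ and $2\delta=[E]$. The first step is the restriction map $r\colon N^1(S^{[2]})_\RR\to N^1(E)_\RR$: clearly $r(h)=H$, and from the normal bundle computation $\O_E(E)\cong\O_{\PP(\Omega^1_S)}(-2)$ (carried out on $\Bl_\Delta(S\times S)$, whose exceptional divisor is $\PP(T_S)\cong\PP(\Omega^1_S)$ with tautological restriction $\O(-1)$) one gets $r(\delta)=-L$; hence $r$ is an isomorphism. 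On the threefold $E$ one computes $H^3=0$, $H^2L=2t$, $HL^2=0$ and $L^3=-c_2(S)=-24$. Two consequences drive everything: first, $M:=L+\tfrac{2}{\sqrt t}H$ is the unique class $L+cH$ $(c>0)$ with $M^3=0$, since $(L+cH)^3=-24+6tc^2$; second, any nef ray $L+\nu H\ne H$ must have $\nu\ge\tfrac{2}{\sqrt t}$ (a nef class has non-negative top self-intersection), so $\nef(E)\subseteq\langle H,M\rangle$ always, with equality exactly when $M$ is nef.

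I would treat the first assertion by the following self-contained mechanism: \emph{if $M$ is nef, then $\eff(E)=\nef(E)=\langle H,M\rangle$}. Indeed, $M$ nef makes $M^2$ a limit of complete-intersection classes, so it pairs non-negatively with every pseudoeffective divisor; since $M^2\cdot H=8\sqrt t$ and $M^2\cdot L=-16$, any pseudoeffective $D=xH+yL$ satisfies $D\cdot f=y\ge0$ (with $f$ a fibre of $\pi$) and $8\sqrt t\,x-16y\ge0$, i.e. $x\ge\tfrac{2}{\sqrt t}y$; these are precisely the two inequalities cutting out $\langle H,M\rangle$, so $\eff(E)\subseteq\langle H,M\rangle$, and combined with $\langle H,M\rangle\subseteq\nef(E)\subseteq\eff(E)$ all three coincide. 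It remains to prove $M$ nef, and this is exactly where the two hypotheses enter through Bayer--Macr\`i: $t$ a square forces the Pell unit $x^2-ty^2=1$ to have only the trivial solution, so there is no second prime exceptional divisor on $S^{[2]}$ that would dip $\eff(E)$ below $M$; and the unsolvability of \eqref{Pell} rules out the destabilising curve on $E$ that would give $M\cdot C<0$. Concretely I would show the second extremal ray of $\nef(S^{[2]})$ restricts onto $M$ and invoke that nefness descends to $E$.

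For the two conditional assertions I would restrict Bayer--Macr\`i's \emph{second} extremal rays directly and certify that they stay extremal. Write the second ray of $\eff(S^{[2]})$ as $D_{\mathrm{BM}}=bh-a\delta$ (the prime exceptional attached to the minimal solution $(a,b)$ of $x^2-ty^2=1$) and that of $\nef(S^{[2]})$ as $N_{\mathrm{BM}}$ (the flopping wall attached to a solution of \eqref{Pell}); their restrictions are $r(D_{\mathrm{BM}})=bH+aL$, which is effective, and $r(N_{\mathrm{BM}})=L+\nu H$ with $\nu>\tfrac{2}{\sqrt t}$, which is nef. The content is extremality on $E$, for which I would exhibit the dual curve as a (de)stabilising section of $\pi$ over a smooth $C_0\in|\O_S(n)|$ coming from a sub-line-bundle $N\hookrightarrow\Omega^1_S|_{C_0}$: such a section has class $(H\cdot\sigma,L\cdot\sigma)=(2tn,-\deg N)$, and choosing $N$ to realise the relevant Pell datum makes $r(D_{\mathrm{BM}})\cdot\sigma=0$ (resp. $r(N_{\mathrm{BM}})\cdot\sigma=0$). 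For the third assertion the solvability of \eqref{Pell} is exactly what produces this destabilising section, and its minimality makes it the \emph{steepest} such curve, pinning the nef boundary to $r(N_{\mathrm{BM}})$. For the second assertion $t$ non-square guarantees a nontrivial Pell unit (so $D_{\mathrm{BM}}$ exists), the unsolvability of \eqref{Pell} guarantees no steeper effective divisor supersedes it, and $b$ even is what makes $D_{\mathrm{BM}}$ primitive and the certifying section \emph{movable}, so that it bounds $\eff(E)$ rather than merely $\nef(E)$. Combining the certifying section with the fibre bound $y\ge0$ pins the respective second ray.

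The main obstacle is precisely this extremality step: restriction does not preserve extremality in general, so in each case one must produce an honest curve on $E$ lying in the exact dual class and verify that the cone does not extend past the restricted ray. This is where the arithmetic hypotheses do the real work---they are the existence (and, for the second assertion, movability) criteria for the destabilising sub-line-bundles of $\Omega^1_S|_{C_0}$, the very curves underlying Theorem A, and the parity/square conditions are what make the restricted Bayer--Macr\`i generators integral and primitive under $r$. A secondary point to handle with care is that the ray $H$ of $\eff(E)$ is \emph{not} the restriction of an extremal ray of $\eff(S^{[2]})$: the only other candidate would be $r(\delta)=-L$, which is not even pseudoeffective (consistent with Nakayama), so the $H$-ray must instead be produced intrinsically from the fibre pairing $D\cdot f=y\ge0$.
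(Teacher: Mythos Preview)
Your restriction formula has a harmless factor-of-two slip: in the paper's conventions $\tH|_E=2H$ (not $H$) while $B|_E=-L$. This does not affect cones but does change your claimed class $r(D_{\mathrm{BM}})$.

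Your treatment of the first assertion is essentially the paper's argument: the Lagrangian-fibration class $\tH-\sqrt{t}B$ restricts to $M=L+\tfrac{2}{\sqrt t}H$, which is therefore nef, and $M^3=0$ pins it to the boundary of both cones. Your $M^2$-pairing formulation is a clean way to package this.

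For the third assertion the paper's mechanism is different from yours and more direct. When \eqref{Pell} has a solution there is a Mukai flop along a plane $P\simeq\PP^2\subset S^{[2]}$, and the extremal curve class $\ell$ on $S^{[2]}$ is a line in $P$. The paper simply observes that $P\cap E\neq\emptyset$ (otherwise $\tH\cdot\ell=0$, impossible since $\HC$ does not contract $\ell$), so $P\cap E$ is a curve in $E$ whose pushforward is a multiple of $\ell$; thus $i_*\colon\operatorname{NE}(E)\to\operatorname{NE}(S^{[2]})$ is surjective and dually the nef restriction is an isomorphism. Your plan to manufacture a destabilising sub-line-bundle $N\hookrightarrow\Omega^1_S|_{C_0}$ with numerics matching the Pell datum is not substantiated---you give no construction of $N$ or $C_0$---and the paper's $P\cap E$ trick avoids this entirely.

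The genuine gap is the second assertion. Your interpretation of condition (iii) is wrong: the parity of $b$ has nothing to do with primitivity of $D_{\mathrm{BM}}$ under restriction or with movability of any section. In the paper, conditions (i)--(iii) are exactly the Debarre--Macr\`i/Zuffetti criterion for $S^{[2]}$ to be \emph{ambiguous}, i.e.\ for the second exceptional divisor $D$ to be isomorphic to $\PP(\Omega^1_T)$ for another K3 surface $T$. Extremality of $D|_E$ in $\eff(E)$ is then proved cohomologically: one shows $h^0(E,mD|_E)$ grows at most quadratically by chasing the ideal sequences of $E$ and of $D$ in $S^{[2]}$, reducing to the vanishing of $\H^1(D,\O_D(mD-E))$ for $m\gg0$. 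Writing $D=\PP(\Omega^1_T)$ and using $\O_D(D)=\omega_D=\O_{\PP(\Omega^1_T)}(-2)$, this becomes $\H^0(T,\Sym^{N}\Omega^1_T\otimes\O_T(-b))=0$ for $N\gg0$, which holds precisely because $\Omega^1_T$ is not pseudoeffective on the K3 $T$ (Nakayama again, applied to $T$). Your proposed route via a movable dual curve is plausible in the abstract, but you supply no construction, and the arithmetic hypotheses do not visibly produce one; this is where the ambiguous-Hilbert-scheme identification is doing real work that your sketch has not replaced.
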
In particular we know the effective cone of $\PP(\Omega^1_S)$ if 
$$d=4,8,12,18,20,24,26,32,34,36,\ldots,$$
whereas we know the nef cone if 
$$d=2,8,10,18,22,32,38,50,58,62,\ldots.$$
We emphasise that in the above cases the cones are explicitly computable, the slopes given in terms of solutions to Pell-type equations. We defer to Section \ref{sectioncones} for particulars. 

As mentioned above, we also prove that the above trick does not work in general: we show that the restriction of the nef
cone of $S^{[2]}$ is a strictly smaller subcone of $\nef(\PP(\Omega^1_S))$ in the degree four and six cases (see Section
\ref{sectionexamples}). The cases in which this restriction is extremal depend on the geometry of the minimal models of
$S^{[2]}$, and one can find results and a classification in work of Bayer--Macr\`i, Hassett--Tschinkel, Markman and others.

In the final section we analyse in more detail these contractions in the cases of K3s of degree $d=2,4,6,8$, improving bounds on, or computing the aforementioned slopes, and describe the geometry of the extremal divisors.
 
\vskip 0,2cm
\textbf{Notation.} We work over the complex numbers. For $G$ a vector bundle on a variety $X$ we say that $G$ is pseudoeffective, big, nef, ample  if $\O_{\PP(G)}(1)$ is a pseudoeffective, big, nef, ample line bundle on $\PP(G)$ respectively. We use Grothendieck notation throughout, so that $\PP(G)$ parameterises one-dimensional quotients of $G$. In particular if $\pi:\PP(G)\to X$ is the projection, we have a universal quotient line bundle $\pi^*G\to\O_{\PP(G)}(1)\to 0$, and for a surjection $G_1\to G_2\to0$ we have an induced inclusion $\PP(G_2)\subseteq\PP(G_1)$ so that $\O_{\PP(G_1)}(1)|_{\PP(G_2)}=\O_{\PP(G_2)}(1)$.

\vskip 0,2cm
\textbf{Acknowledgements.} We would like to thank Yohan Brunebarbe and Mike Roth for numerous
conversations which got this project started but also N. Addington, B. Bakker, J. Gu\'er\'e, B.
Hassett, H.-Y. Lin, E.  Macr\`i, M. Mauri, G. Mongardi, K. O'Grady, J. Sawon, R. Vacca for helpful
discussions. We would also like to thank the Mathematisches Forschungsinstitut of Oberwolfach for
providing excellent working conditions when this project began during a \emph{Research in Pairs} of the
authors.

\section{Preliminaries}\label{prelim}
\noindent Throughout the paper we will let $S$ denote a K3 surface with a primitive ample line bundle $\O_S(1)$ of
degree $d=\O_S(1)^2$, which is always an even number, so we will sometimes write $d=2t$. We will mostly assume that $\Pic(S)=\ZZ$, generated by $\O_S(1)$. Let $E=\PP(\Omega^1_S)$ and $\pi:E\to S$ the projection. In this case the Picard group of $E$ is generated by $L=\O_{\PP(\Omega^1_S)}(1)$ and $H=\pi^*\O_S(1)$. We have the following intersection numbers:
\begin{alignat*}{3}
L^3=s_2(\Omega^1_S)&=-24,\qquad && L\cdot H^2&=d, \\
L^2\cdot H=s_1(\Omega^1_S)&=0,\qquad  && \quad H^3&=0.
\end{alignat*}
We denote by $\alpha_{e,d}$ and $\alpha_{n,d}$ the (positive) real numbers which are the slopes of the pseudoeffective and nef cones of $E$ respectively. In other words, \begin{align*}
\eff(\PP(\Omega^1_S))=\left\langle H, L+\alpha_{e,d}H\right\rangle& \\
\nef(\PP(\Omega^1_S))=\left\langle H, L+\alpha_{n,d}H\right\rangle&.
\end{align*}
These numbers are often called the \textit{pseudoeffective} and \textit{nef thresholds} for $\Omega^1_S$. We denote by $S^{[2]}$ the Hilbert scheme of length two subschemes on $S$. For a divisor $H$ on $S$, we get a divisor $\tH$ on $S^{[2]}$ by taking subschemes incident to $H$. We also have the Hilbert--Chow morphism 
$$\HC:S^{[2]}\to \operatorname{Sym}^2S$$taking a subscheme to its support. This gives $\Pic(S^{[2]})\simeq \Pic(S)\oplus \ZZ B$ where $B=\frac{1}{2}E$ and $E$ is the exceptional divisor of $\HC$ (the divisor of non-reduced subschemes). Note in particular that $E\simeq\PP(\Omega^1_S)$. In terms of $L$ and $H$, we have the restrictions 
\begin{equation*}
\tH|_E = 2H, \qquad B|_E=-L,
\end{equation*}
where the first follows from the fact that $\operatorname{pr}_1^*\O_S(1) \otimes \operatorname{pr}_2^*\O_S(1)$ restricts
to $\O_S(2)$ on the diagonal $\Delta\subset S\times S$.

\subsection{Stability of restrictions}\label{sectiondestabcurves}  

As mentioned in the introduction, the positivity of $\Omega^1_S$ on a curve is closely related to stability. This is because of Hartshorne's theorem \cite[\S 6.4.B]{laz2} which implies that on a smooth curve, a vector bundle of degree zero is nef if and only if it is semistable. In particular,
\begin{lemma}\label{lazlemma}
	Let $(S,\O_S(1))$ be a K3 surface and $C\subset S$ be a smooth curve. Then $\Omega^1_S|_C$ is nef if and only if it is semistable.
\end{lemma}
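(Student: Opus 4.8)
The plan is to reduce the statement to the cited theorem of Hartshorne--Lazarsfeld, for which the essential input is that $\Omega^1_S|_C$ has degree zero. Since $C$ is smooth, the restriction $\Omega^1_S|_C$ is a genuine rank-two vector bundle on the smooth projective curve $C$, so the criterion applies directly.

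First I would compute its degree. Because $S$ is a K3 surface its canonical bundle is trivial, so $\det\Omega^1_S = K_S = \O_S$ and hence $c_1(\Omega^1_S)=0$. Restricting to $C$ gives $\deg(\Omega^1_S|_C)=c_1(\Omega^1_S)\cdot C=0$, so $\Omega^1_S|_C$ has slope $\mu=0$. Next I would invoke the criterion recalled above: on a smooth curve a vector bundle is nef precisely when each of its quotient bundles has nonnegative degree. For a bundle of total degree zero, asking that every quotient have nonnegative degree is exactly asking that every quotient have slope $\geq 0=\mu(\Omega^1_S|_C)$, which is the definition of semistability. The two conditions coincide, and both implications of the lemma follow simultaneously.

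There is no real obstacle here; the only subtle point is that the equivalence in Hartshorne's theorem is specific to bundles of degree zero, so everything rests on the computation $\deg(\Omega^1_S|_C)=0$. This in turn is precisely the K3 condition $K_S=\O_S$: for a surface with nontrivial canonical class the restricted cotangent bundle would generically have nonzero degree, and nefness and semistability would no longer agree.
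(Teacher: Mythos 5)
Your argument is correct and is essentially the paper's own proof: the paper deduces the lemma from Hartshorne's theorem \cite[\S 6.4.B]{laz2}, that a degree-zero vector bundle on a smooth curve is nef if and only if it is semistable, combined with exactly your computation that $K_S=\O_S$ forces $\deg(\Omega^1_S|_C)=c_1(\Omega^1_S)\cdot C=0$. One small caution: your intermediate ``criterion'' (nef if and only if every quotient on $C$ itself has nonnegative degree) is not an elementary fact but is itself the characteristic-zero content of Hartshorne's theorem --- in general nefness must be tested on quotient line bundles of pullbacks along finite covers $C'\to C$ --- so you are invoking the cited theorem there rather than supplying an independent input.
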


A consequence of Yau's Theorem is that $\Omega^1_S$ is stable with respect to any polarisation. This does not imply that the restriction to each curve $C$ is stable, as there can be negative quotients $\Omega^1_S|_C\to Q\to 0$ which do not come from quotients of $\Omega^1_S$. On the other hand, one can apply results of Bogomolov to show that for any stable vector bundle $G$ on a surface $S$, if we take a smooth curve in a (computable) high enough multiple of the polarisation, then the restriction $G|_C$ is stable. In our case, Bogomolov's theorem \cite[Theorem 7.3.5]{huybrechtslehn} implies that if $C\in|\O_S(n)|$ for $n>49$ then $\Omega^1_S|_C$ is stable. In fact, if $S$ is a K3 surface this has been extended in various directions by Hein. Hein's results apply to any K3 surface, but we only state them in the case of Picard number one.

\begin{theorem}[Hein]\label{hein}
    Let $(S,\O_S(1))$ be a smooth projective K3 surface of degree $2t$ and Picard number one.
    \begin{enumerate}[$(1)$]
        \item  If $t(2m-1)>48$ then for any smooth curve $C\in|\O_S(m)|$  we have that $\Omega^1_S|_C$ is a stable vector bundle. 
        \item  For any $t\geq1$, $m\geq1$, $(t,m)\neq(1,1),(1,2)$ and general curve $C\in|\O_S(m)|$, the restriction $\Omega^1_S|_C$ is semistable.
    \end{enumerate}
\end{theorem}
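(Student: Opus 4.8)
The plan is to reduce both statements to the study of sub-line-bundles of the rank-two bundle $\Omega^1_S|_C$. Since $c_1(\Omega^1_S)=0$, this bundle has degree $C\cdot c_1(\Omega^1_S)=0$, so it is stable (resp. semistable) precisely when it carries no sub-line-bundle of nonnegative (resp. positive) degree. Throughout I use that $\Omega^1_S$ is $\mu$-stable on $S$ (as recalled above, a consequence of Yau's theorem) with $c_1(\Omega^1_S)=0$ and $c_2(\Omega^1_S)=24$, hence discriminant $\Delta(\Omega^1_S)=4c_2=96$, together with the hypothesis $\Pic(S)=\ZZ H$ and $H^2=2t$, which forces every rank-one subsheaf $A\subset\Omega^1_S$ to satisfy $c_1(A)\cdot H\le -2t$.

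For part (1) I would run a Bogomolov-style effective restriction argument. Suppose $\Omega^1_S|_C$ has a sub-line-bundle $M$ of degree $e\ge 0$, with quotient $N=\Omega^1_S|_C/M$, and form the elementary modification $F=\ker(\Omega^1_S\to i_*N)$ on $S$, where $i\colon C\incl S$. A Chern class computation gives $c_1(F)=-mH$ and $c_2(F)=24-e$, so that $\Delta(F)=96-4e-2tm^2$. I then split into two cases. If $F$ is $\mu$-semistable, Bogomolov's inequality $\Delta(F)\ge 0$ together with $e\ge 0$ already bounds $tm^2$. If $F$ is unstable, its maximal destabilising sub-line-bundle $A$ is in particular a subsheaf of $\Omega^1_S$, so $c_1(A)\cdot H\le -2t$; feeding the saturated sequence $0\to A\to F\to Q\to 0$ into the Hodge index identity $\Delta(F)=-(c_1(A)-c_1(Q))^2+4\length(Z)$, with $Z$ the cosupport of $Q$, produces a second numerical constraint. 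Because $\Pic(S)$ has rank one, $c_1(A)-c_1(Q)$ is an integral multiple of $H$, so the identity becomes clean; combining the two cases and optimising over the slope of $A$ shows that $\Omega^1_S|_C$ is stable once $t(2m-1)>48$, the binding constraint arising from the boundary slope $c_1(A)=-H$. The only real work here is the bookkeeping in the unstable case and verifying that this boundary slope is the extremal one.

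For part (2) I would argue that a family of destabilising subbundles over $|\O_S(m)|$ must descend to a destabilising subsheaf of $\Omega^1_S$ on $S$, contradicting $\mu$-stability. Assume the general $C\in|\O_S(m)|$ fails to be semistable, so that it carries a unique maximal destabilising sub-line-bundle $M_C\subset\Omega^1_S|_C$ of a fixed degree $e>0$. Spreading the Harder--Narasimhan filtration out over the universal curve $\C\subset S\times|\O_S(m)|$, these glue to a sub-line-bundle $\mathcal M\subset p^*\Omega^1_S|_{\C}$, where $p\colon\C\to S$ and $q\colon\C\to|\O_S(m)|$ are the two projections; equivalently $\mathcal M$ defines a morphism $\phi\colon\C\to\PP(\Omega^1_S)$ over $S$. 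The key geometric point is that the fibres of $p$ are the linear subsystems $\{C:x\in C\}\cong\PP^{N-1}$ of curves through a point $x$, and over such a fibre $p^*\Omega^1_S$ is trivial, so $\phi$ restricts to a map $\PP^{N-1}\to\PP((\Omega^1_S)_x)\cong\PP^1$. If this map is constant for general $x$, then $\phi$ descends to a section of $\PP(\Omega^1_S)\to S$, equivalently to a rank-one quotient $\Omega^1_S\to\mathcal N$ whose kernel $\mathcal L$ is a sub-line-bundle with $c_1(\mathcal L)\cdot H=e/m>0$, contradicting the $\mu$-stability of $\Omega^1_S$.

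The crux of part (2) is therefore to show that the destabilising direction does not vary over the pencils of curves through a point, i.e.\ that $\phi^*\O_{\PP(\Omega^1_S)}(1)$ has degree zero on the fibres of $p$. I would establish this by a degree computation on $\C$: writing $\phi^*\O_{\PP(\Omega^1_S)}(1)=\alpha\,p^*H+\beta\,q^*\O(1)$, restriction to the fibres of $q$ pins down $\alpha$ through $\deg N_C=-e$, while the inclusion of $\mathcal M$ into a trivial bundle on the fibres of $p$ forces $\beta\ge 0$; the remaining input is a numerical inequality (again of Bogomolov/Hodge-index type, now on $\C$) showing $\beta=0$ unless $(t,m)=(1,1)$. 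This is the step I expect to be the main obstacle, and it is exactly where the degree-two genus-two case must be excluded: there $|\O_S(1)|$ exhibits $S$ as a double cover of $\PP^2$, the linear system is too small for the degree computation to force constancy, and the double-cover structure produces a genuine destabilising sub-line-bundle on the general curve.
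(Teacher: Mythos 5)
The paper does not actually prove this theorem: both parts are quoted, (1) from \cite[Theorem 2.8]{hein} and (2) from \cite[Korollar 3.11]{heinthesis} combined with the observation that the general member of $|\O_S(m)|$ is non-hyperelliptic exactly when $(t,m)\neq(1,1)$. So you are reconstructing Hein's proofs from scratch. Your part (1) is essentially the right, standard argument (it is the Bogomolov-type elementary-modification technique underlying Hein's restriction theorem), and the arithmetic checks out: with $c_1(F)=-mH$, $c_2(F)=24-e$, so $\Delta(F)=96-4e-2tm^2$, the semistable branch gives $tm^2\le 48-e\le 48$, which already contradicts $t(2m-1)>48$ since $m^2\ge 2m-1$; in the unstable branch, $\Pic(S)=\ZZ H$ and stability of $\Omega^1_S$ give $c_1(A)=-aH$ with $a\ge 1$, the destabilising condition $\mu(A)>\mu(F)$ reads $2a<m$ (so $m\ge 3$), and the identity $\Delta(F)=-(m-2a)^2H^2+4\length(Z)$ yields $4e\le 96-8ta(m-a)\le 96-8t(m-1)$; then $e\ge 0$ forces $t(m-1)\le 12$, hence $t\le 6$ and $t(2m-1)=2t(m-1)+t\le 30<48$, a contradiction. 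Modulo the bookkeeping you defer, this is a complete route to (1).

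Part (2), however, contains a genuine gap, which you yourself flag: the entire content of the statement is your claim $\beta=0$, i.e.\ that the destabilising direction is constant along the $\PP^{N-1}$ of curves through a general point, and for this you offer only the hope of ``a numerical inequality of Bogomolov/Hodge-index type on $\C$.'' Note that you cannot shortcut this via constancy of morphisms $\PP^{N-1}\to\PP^1$: the relative Harder--Narasimhan filtration exists only over an open locus of $|\O_S(m)|$ (where the curves are smooth and the HN type is constant), so $\phi$ is merely a rational map on each fibre of $p$, and rational maps $\PP^{N-1}\dashrightarrow\PP^1$ are certainly not constant; this is precisely why a real input is needed, and as stated your ``remaining input'' is a restatement of the theorem rather than an argument. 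Moreover, the small values of $t$ and $m$ allowed in (2) leave no numerical slack for a Bogomolov-type bound of the kind that powers part (1), so it is doubtful the proposed mechanism can close the gap. It is telling that the input in Hein's actual proof is the non-hyperellipticity of the general curve --- a geometric hypothesis absent from your sketch except as an a posteriori remark about $(t,m)=(1,1)$ --- indicating that the true mechanism, including the correct exclusion of the degree-two, $m=1$ case, is of a different nature than the descent-plus-numerics you propose. In short: (1) is acceptable as a proof sketch; (2) is not yet a proof.
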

\begin{proof}
    The first statement follows directly from \cite[Theorem 2.8]{hein}. Then (ii) follows from \cite[Korollar 3.11]{heinthesis}, and the fact that a general member of $|\O_S(m)|$ is not hyperelliptic for these values of $t$ and $m$.
\end{proof}
To be more concrete, for $m=1$ we obtain $t>48$, for $m=2$, $t>16$ and $m=3$ gives $t>9$. On the other hand, for $t\geq1$ we obtain that $m\geq25$, which is an improvement on the bound of Bogomolov mentioned above.

In other words finding curves which destabilise the cotangent bundle is limited to low degree K3s and low degree multiples of the polarisation, and even more so only to small parts of the linear systems considered. 
Constructing such curves in general seems out of reach, so in Section \ref{sectionexamples} we proceed on a case by case basis.  

An initial natural attempt is to consider the ramification curves of a generic projection $S\to \PP^2$.  The starting point here is the fact that in the degree two case, the ramification curve $C$ of the degree two cover $f : S \to \PP^2$ is smooth, and the restriction of $\Omega^1_S$ to it is not semistable (see Section \ref{sectiondeg2}). As explained in the proof below, these curves lie in $|\O_S(3)|$, so Hein's Theorem \ref{hein} implies that they cannot be destabilising if $t>9$.

\begin{proposition}\label{propramdiv}
    Let $(S,\O_S(1))$ be a K3 surface of degree $2t>2$ and $S\subset\PP^{t+1}$ the induced embedding. For a linear space $\Lambda=\PP^{t-2}\subset\PP^{t+1}$ denote by $R_\Lambda\subset S$ the ramification divisor of the projection $S\to \PP^2$ from $\Lambda$. If $\Lambda$ is general and $t\leq3$ then $\Omega^1_S|_{R_\Lambda}$ is not semistable, whereas for $t=4$ it is strictly semistable.
\end{proposition}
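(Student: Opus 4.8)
The plan is to realize $R_\Lambda$ as the ramification divisor of the projection $f=\pi_\Lambda\colon S\to\PP^2$ and to destabilize $\Omega^1_S|_{R_\Lambda}$ using the relative cotangent sequence of $f$. Writing $H=\O_S(1)$, the map $f$ is finite of degree $H^2=2t$ and satisfies $f^*\O_{\PP^2}(1)=H$, since it is given by the subsystem of $|H|$ of hyperplanes through $\Lambda$. From $K_S=f^*K_{\PP^2}+R_\Lambda$ and $K_S=0$ I get $R_\Lambda\in|3H|$; in particular $\deg(\Omega^1_S|_{R_\Lambda})=K_S\cdot R_\Lambda=0$, so the restriction has slope zero and, by Lemma \ref{lazlemma}, destabilizing it is the same as exhibiting a quotient line bundle of negative degree.

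Such a quotient comes from the relative differentials. The cotangent sequence
\[ 0\to f^*\Omega^1_{\PP^2}\xrightarrow{df}\Omega^1_S\to\Omega^1_{S/\PP^2}\to0 \]
presents $Q:=\Omega^1_{S/\PP^2}$ as a sheaf supported on $R_\Lambda$. For general $\Lambda$ I would check that $R_\Lambda$ is smooth and that $Q$ is the pushforward $i_*M$ of a line bundle $M$ on $R_\Lambda$, where $i\colon R_\Lambda\hookrightarrow S$ is the inclusion: the generic projection has only fold and cusp singularities, and at both the cokernel of $df$ is locally free of rank one along $R_\Lambda$. Because $Q$ is an $\O_{R_\Lambda}$-module, the surjection $\Omega^1_S\to Q$ factors through a surjection $\Omega^1_S|_{R_\Lambda}\twoheadrightarrow M$. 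I then compute $\deg M$ by comparing total Chern classes in $c(\Omega^1_S)=c(f^*\Omega^1_{\PP^2})\,c(i_*M)$: using $c_2(\Omega^1_S)=24$, $c_1(f^*\Omega^1_{\PP^2})=-3H$, $c_2(f^*\Omega^1_{\PP^2})=\deg(f)\cdot c_2(\Omega^1_{\PP^2})=6t$, together with $c_1(i_*M)=[R_\Lambda]=3H$ and $c_2(i_*M)=R_\Lambda^2-\deg M$, the codimension-two term (using $H^2=2t$ and $R_\Lambda^2=18t$) gives $24=6t-\deg M$, i.e. $\deg M=6t-24$.

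Comparing $\deg M$ with the slope $0$ then finishes the proof. For $t\le3$ we have $\deg M=6t-24<0$, so $M$ is a negative quotient and $\Omega^1_S|_{R_\Lambda}$ is not semistable. For $t=4$ we get $\deg M=0$: the proper quotient $M$ has slope equal to that of the bundle, so $\Omega^1_S|_{R_\Lambda}$ is not stable, and it remains to prove it is semistable. Here the kernel $N$ of $\Omega^1_S|_{R_\Lambda}\twoheadrightarrow M$ is a line bundle with $\deg N=-\deg M=0$, and any sub-line-bundle $\bar N\subset\Omega^1_S|_{R_\Lambda}$ with $\deg\bar N>0$ would, since $\deg\bar N>0=\deg M$, map to zero in $M$ and hence factor through $N$, contradicting $\deg N=0$. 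Thus no sub-line-bundle of positive degree exists, $\Omega^1_S|_{R_\Lambda}$ is semistable, and therefore strictly semistable.

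The step I expect to be most delicate is the geometric input in the second paragraph: that for general $\Lambda$ the ramification curve $R_\Lambda$ is smooth and that $\Omega^1_{S/\PP^2}$ is genuinely a line bundle on it. This amounts to controlling the singularities of a generic projection (only folds and cusps, and no worse degeneration), which is classical but is the technical heart of the argument. Granting it, the Chern-class computation and the slope comparison are formal, and the numerical value $\deg M=6t-24$ is exactly what produces the trichotomy between $t\le3$, $t=4$ and $t\ge5$.
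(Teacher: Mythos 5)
Your proof is correct, but it takes a genuinely different route from the paper. The paper works upstairs on $\PP(\Omega^1_S)$: it factors the natural map $\phi:\PP(\Omega^1_S)\to\operatorname{Gr}(2,t+2)$ through $\PP(U^\vee)$, represents the lift of $R_\Lambda$ by the Schubert class $\phi^*\sigma_2=L^2+3L\cdot H+3H^2$ (computed via the Grothendieck relation), and gets $L\cdot\phi^*\sigma_2=6t-24$, so the tautological quotient $\pi^*\Omega^1_S\to L$ destabilises for $t\le 3$ and gives a degree-zero quotient for $t=4$. You instead work downstairs on $S$, producing the destabilising quotient intrinsically as $M=\Omega^1_{S/\PP^2}|_{R_\Lambda}$ from the relative cotangent sequence, and extracting $\deg M=6t-24$ from the Whitney formula together with $c_2(i_*M)=R_\Lambda^2-\deg M$ (a correct GRR computation). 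Both arguments rest on the same classical input, which you rightly flag as the delicate point: for general $\Lambda$ the projection is a finite morphism with smooth irreducible ramification curve in $|\O_S(3)|$ and only fold and cusp singularities, which the paper sources from Ciliberto--Flamini; your local verification that the cokernel of $df$ is a line bundle on $R_\Lambda$ at folds and cusps is exactly what is needed. In fact your method is essentially the one the paper itself uses for the quartic in Proposition \ref{propdeg4destab}(2), where $\deg\Omega^1_f|_{R}=-12=6\cdot 2-24$ is obtained via the sequence \eqref{relcotangentseqn} --- except that there the paper counts the $24$ cusps of the branch curve enumeratively, while your Whitney-formula computation bypasses the cusp/node count entirely and treats all $t$ uniformly; what the paper's Schubert-calculus route buys in exchange is the explicit curve class on $\PP(\Omega^1_S)$, which feeds into its study of the cones of $E$. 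Your $t=4$ semistability argument (a positive-degree sub-line-bundle would map to zero in $M$, hence land in the degree-zero kernel $N$) is a correct fleshing-out of the paper's one-line ``extension of degree zero line bundles'' remark. One small note: the statement's $\Lambda=\PP^{t-1}$ is a typo --- projection from a $\PP^{t-1}$ in $\PP^{t+1}$ lands in $\PP^1$, and the centre of a projection to $\PP^2$ is a $\PP^{t-2}$ (consistent with the codimension-two cycle $\sigma_2$ in the paper's proof and with projection from a point when $t=2$); your argument silently uses the correct setup, as $f:S\to\PP^2$ finite of degree $2t$ with $f^*\O_{\PP^2}(1)=H$.
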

\begin{proof}
From \cite{cilibertoflamini} we know that since $\Lambda$ is general, the projection $S\to\PP^2$ is a morphism and $R_\Lambda$ is a smooth irreducible curve in $|\O_S(3)|$. Consider the natural morphism $\phi:\PP(\Omega_S^1)\to\operatorname{Gr}(2,t+2)$ (see Section \ref{sectioncones}). This factors as
$$\PP(\Omega^1_S)\subset\PP(\Omega^1_{\PP^{t+1}}|_S)\subset\PP(\Omega^1_{\PP^{t+1}}) = \PP(U^\vee) \xrightarrow{p}
    \operatorname{Gr}(2,t+2)$$where $U$ is the universal subbundle on the Grassmannian and $p$ is the bundle projection. Let us consider $\sigma_2\subset \operatorname{Gr}(2,t+2)$ the Schubert cycle parameterising lines meeting the $(t-2)$-plane $\Lambda$. The class $\phi^{*}\sigma_2$ is represented by a smooth curve $C\subset \PP(\Omega_S^1)$ whose image in $S$ is a curve whose points correspond to lines meeting $S$ with multiplicity at least two that also meet $\Lambda$, i.e. the curve $R_\Lambda$. We claim that $L$ is negative on this curve. 

To compute the class of $C$, note that $\sigma_2=c_1^2(U^\vee)-c_2(U^\vee)$. On $\PP(U^\vee)$, the line bundle $\O_{\PP(U^\vee)}(1)$ corresponds to $H$ and $g=L+2H$ corresponds to the pullback of the Pl\"ucker polarisation. Using the Grothendieck relation on $\PP(U^\vee)$ we find that $\phi^*\sigma_2=L^2+3L\cdot H+3H^2$. This gives $L\cdot\phi^*\sigma_2 = 6t-24$ which is negative if $t\leq3$, so $L\cdot C<0$. Since $R_\Lambda$ is smooth, we have from Lemma \ref{lazlemma} that $\Omega^1_S|_{R_\Lambda}$ is not semistable for $t\le 3$. If $t=4$, we get a degree zero quotient of the vector bundle $\pi^*\Omega^1_S$ restricted to the curve $C$, and hence by pushing down a degree zero quotient on $R_\Lambda$. Hence $\Omega^1_S|_{R_\Lambda}$ is semistable as it is an extension of degree zero line bundles, but not stable.
\end{proof}

\subsection{Results of Bayer--Macr\`i}\label{BMsection}
To approach the computation of the pseudoeffective and nef cones of $\PP(\Omega^1_S)$, we will repeatedly be using the results of Bayer--Macr\`i \cite[Section 13]{bayermacri}. Note that the pseudoeffective and movable cones are dual under the Beauville--Bogomolov--Fujiki form, which allows the computation of the former from the results \emph{loc. cit.} on the latter (see \cite[p.570]{bayermacri}).

\begin{theorem}[Theorem 13.1 and 13.3 in \cite{bayermacri}]\label{bm}
    Let $(S, \O_S(1))$ be a polarised K3 surface of degree $2t$ and Picard number one.
    \begin{enumerate}[$(1)$]
        \item Assume the equation $x^2-4ty^2=5$ has no solutions.
        \begin{enumerate}[$(a)$]
            \item If $t$ is a perfect square then $\nef(S^{[2]}) = \mov(S^{[2]}) = \langle \tH, \tH - \sqrt{t}B
            \rangle$, $\eff(S^{[2]})=\langle B, \tH-\sqrt{t}B\rangle$.
            \item If $t$ is not a perfect square then $x^2-ty^2=1$ has a minimal solution\footnote{Minimal
            meaning $a$ is minimal and $a,b>0$.} $(a,b)$ and the cones are given by $\nef(S^{[2]}) = \mov(S^{[2]}) = \langle \tH, \tH -
            t\frac{b}{a}B\rangle$ and $\eff(S^{[2]})=\langle B, \tH-\frac{a}{b}B\rangle$.
        \end{enumerate}
        \item If the equation $x^2-4ty^2=5$ has a minimal solution $(c,d)$ then\footnote{Note there is a typo in \cite[13.3(b)]{bayermacri}, fixed here.} $\nef(S^{[2]}) = \langle \tH,
        \tH-2t\frac{d}{c}B\rangle$ whereas $\mov(S^{[2]})$ and $\eff(S^{[2]})$ are as in Case $(1)$.
    \end{enumerate}
\end{theorem}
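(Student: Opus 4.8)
The plan is to realise $S^{[2]}$ as a moduli space of Bridgeland-stable objects and run the wall-crossing machinery of Bayer--Macr\`i, reducing each extremal ray to a short arithmetic question on the algebraic Mukai lattice $\Lambda$ of $S$. For $S$ of degree $2t$ and Picard number one, write $\ell$ for the generator of $\Pic(S)$, so that $\Lambda=\ZZ(1,0,0)\oplus\ZZ(0,\ell,0)\oplus\ZZ(0,0,1)$ with $\ell^2=2t$ and Mukai pairing $\langle(r,x\ell,s),(r',x'\ell,s')\rangle=2t\,xx'-rs'-r's$, and $S^{[2]}=M_\sigma(v)$ for the primitive vector $v=(1,0,-1)$, which has $\langle v,v\rangle=2$. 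The Mukai morphism $\theta_v\colon v^\perp\to\Pic(S^{[2]})$ is an isometry onto $(\Pic(S^{[2]}),q)$; since $v^\perp\cap\Lambda=\ZZ(0,\ell,0)\oplus\ZZ(1,0,1)$, one checks that $\theta_v$ sends $(0,\ell,0)\mapsto\tH$ and $(1,0,1)\mapsto B$ (up to sign), recovering $q(\tH)=2t$, $q(B)=-2$, $q(\tH,B)=0$.

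First I would recall the dictionary from \cite{bayermacri}: the positive cone of $\Pic(S^{[2]})_\RR$ is subdivided into chambers by walls, each cut out by a rank-two hyperbolic sublattice $\mathcal H\subset\Lambda$ containing $v$; the movable cone is the union of these chambers, each being the nef cone of a birational model, and the wall type is read off from the small classes of $\mathcal H$. A spherical class $s$ (i.e. $\langle s,s\rangle=-2$) with $\langle s,v\rangle=0$ gives a Brill--Noether divisorial contraction, an isotropic $w$ with $\langle w,v\rangle=1$ a Hilbert--Chow contraction, and a spherical $s$ with $0<\langle s,v\rangle\le\tfrac12\langle v,v\rangle=1$ a flopping contraction. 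Such a wall contributes an extremal ray of $\mov$ (divisorial or isotropic case) or an inner extremal ray of $\nef$ (flopping case), and under $\theta_v$ the ray is the BBF-orthogonal of the contracted class $\theta_v(w_0)$, where $w_0$ spans $v^\perp\cap\mathcal H$.

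The heart is to enumerate these small classes, all of the form $w_0=(-r,x\ell,-r)\in v^\perp$, for which $\theta_v(w_0)=x\tH-rB$ and $\langle w_0,w_0\rangle=2(tx^2-r^2)$. A spherical class in $v^\perp$ thus requires $r^2-tx^2=1$, i.e. a solution of $x^2-ty^2=1$; this exists nontrivially precisely when $t$ is not a square, and the minimal solution $(a,b)$ gives the Brill--Noether wall with contracted class $b\tH-aB$ and movable boundary ray its orthogonal $\tH-t\tfrac ba B$, as in Case (1b). An isotropic class in $v^\perp$ instead forces $t$ to be a perfect square and gives the Lagrangian-fibration boundary $\tH-\sqrt t B$ of Case (1a). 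For the inner flopping wall one needs a spherical $u$ with $\langle u,v\rangle=1$; writing $u=\tfrac12(v+w_0)$ with $w_0=(-c,2d\ell,-c)$, one computes $\langle u,v\rangle=1$ and $\langle u,u\rangle=\tfrac14\bigl(2+\langle w_0,w_0\rangle\bigr)$, so that $u$ is integral and spherical exactly when $\langle w_0,w_0\rangle=2(4td^2-c^2)=-10$, i.e. when $x^2-4ty^2=5$ is solvable; the minimal solution $(c,d)$ then yields the nef boundary $\tH-2t\tfrac dc B$ of Case (2). The integrality and primitivity checks (using that $c$ is necessarily odd and $\gcd(c,d)=1$) are routine.

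Finally the pseudoeffective cone follows formally: since $\eff$ and $\mov$ are BBF-dual, its extremal rays are the orthogonals of those of $\mov$, giving $B=\tH^\perp$ on one side and $\tH-\tfrac ab B=(\tH-t\tfrac ba B)^\perp$ (respectively the self-orthogonal $\tH-\sqrt tB$) on the other. The main obstacle I anticipate is not the lattice arithmetic but justifying the input dictionary: one needs the Bayer--Macr\`i identification of $S^{[2]}$ with a Bridgeland moduli space, their theorem that the Bayer--Macr\`i map identifies stability chambers with the nef cones of birational models, and their classification of totally semistable walls. Granting these, the proof is the elementary number theory above together with the bookkeeping of which wall is innermost.
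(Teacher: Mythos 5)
The paper contains no proof of this statement to compare against: Theorem~\ref{bm} is imported verbatim from Bayer--Macr\`i \cite{bayermacri}, the authors adding only the remark that $\eff(S^{[2]})$ is recovered from $\mov(S^{[2]})$ by duality under the Beauville--Bogomolov--Fujiki form (p.~570 loc.\ cit.) and the footnoted correction of the typo in 13.3(b). What you have written is therefore not an alternative route but a faithful compression of the cited source's own wall-crossing proof, and your lattice arithmetic is correct: $v=(1,0,-1)$ has $v^\perp=\ZZ(0,\ell,0)\oplus\ZZ(1,0,1)$; a spherical $w_0=(-a,b\ell,-a)\in v^\perp$ corresponds to $a^2-tb^2=1$ and gives the Brill--Noether ray $q$-orthogonal to $b\tH-aB$, namely $\tH-t\frac{b}{a}B$; an isotropic $w_0\in v^\perp$ forces $t$ to be a square and gives the self-orthogonal ray $\tH-\sqrt{t}B$; and every spherical $u$ with $\langle u,v\rangle=1$ is of the form $\frac12(v+w_0)$ with $w_0=2u-v=(-c,2d\ell,-c)$ and $c^2-4td^2=5$ (note $c$ is automatically odd since $c^2\equiv 1 \bmod 4$), yielding the flopping wall $\tH-2t\frac{d}{c}B$ --- which is exactly the corrected slope in case (2). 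Your duality computation for $\eff$ also checks out against the degree-$4$ data in Section~\ref{sectiondeg4}.

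Three caveats, none fatal. First, your wall dictionary omits the Li--Qin divisorial case (isotropic $w$ with $\langle w,v\rangle=2$); for $\langle v,v\rangle=2$ this is harmless, since then $w-v\in v^\perp$ has square $-2$ and such a wall lies on the same ray as a Brill--Noether one, but a complete enumeration should record this. Second, the ``innermost wall'' bookkeeping you defer is a genuine step: in case (2) one must verify that the minimal solution gives the wall nearest $\tH$ and that it lies strictly inside the movable cone --- for $t$ a square this is $4td^2<c^2=4td^2+5$, while for $t$ non-square one needs $2ad<bc$, equivalently $5b^2>4d^2$, comparing the two Pell equations; conversely, your parametrisation of spherical classes $u$ with $\langle u,v\rangle=1$ does correctly show that no flopping wall exists in case (1), and minimality of $a$ does give the boundary Brill--Noether wall since $b'/a'=\sqrt{1-1/a'^2}/\sqrt{t}$ increases along solutions. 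Third, as you concede, the argument grants the foundational theorems of \cite{bayermacri} (the Bridgeland moduli identification, the chamber decomposition of the movable cone, and the classification of walls); since those are the substance of the very theorem being quoted, your write-up should be read as an exposition of the citation's proof rather than an independent one --- which is the appropriate thing to supply for an imported result, and it reproduces the paper's typo fix correctly.
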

The above results in fact give a complete picture of the birational models of $S^{[2]}$. We give a short description
here as they will be important in what follows. First note that one extremal ray of the movable cone of $S^{[2]}$ is
given by $\tH$, and it induces the Hilbert--Chow contraction to $\operatorname{Sym}^2S$. We now describe the other ray of the movable
cone, which we will often call \textit{the second extremal ray}. Firstly, assume that the Pell-type equation 
$$x^2-4ty^2=5$$
does not have a solution. If $t$ is a square we have a Lagrangian fibration $S^{[2]}\to\PP^2$, and hence the second
boundaries of $\nef(S^{[2]})$ and $\eff(S^{[2]})$ agree. If $t$ is not a square then (see \cite[Proposition
3.6]{debarremacri}) there is a divisorial contraction
$S^{[2]}\to Y$ contracting an irreducible divisor $D$ to a smooth K3 surface $T\subset Y$ and the restriction to $D$ is a
$\PP^1$-fibration $D\to T$ (in the sense that the general fibre is $\PP^1$). We note that like in the example of the
Hilbert--Chow contraction to $\operatorname{Sym}^2 S$, the variety $Y$ need not be smooth. 

If on the other hand the above Pell-type equation does have a solution, then there is a finite sequence of Mukai flops
to a smooth irreducible holomorphic symplectic variety $S^{[2]}\dashrightarrow X^+$ which, as above, either admits a
divisorial contraction $X^+\to Y$, contracting an irreducible divisor $D\to T$ to a smooth K3 surface $T$ (if $t$ is not a
square), or has a Lagrangian fibration $X^+\to\PP^2$ (if $t$ is a square).

\section{The cones of divisors of \texorpdfstring{$\PP(\Omega^1_S)$}{Lg}}\label{sectioncones}\noindent We begin with
some simple bounds for the nef and pseudoeffective cones on $\PP(\Omega^1_X)$ for a smooth projective variety $X\subset
\PP^n$. A convenient ingredient here is the morphism \begin{equation}\label{Grmap}f:\PP(\Omega^1_{X})\to {\rm
Gr}(2,n+1)\end{equation} which associates a tangent vector to the corresponding point in the Grassmannian. For $L$ the
tautological line bundle on $\PP(\Omega^1_X)$ and $H$ the polarisation on $X$ as in Section \ref{prelim}, one checks
that $f^*\O_{{\rm Gr}(2,n+1)}(1)=L+2H$. When $f$ maps to a lower-dimensional variety, $L+2H$ is extremal in the nef cone
as well as the pseudoeffective cone.

\begin{proposition}\label{coneprojspace}
  The vector bundle $\Omega^1_{\PP^n}(m)$ is nef if and only if $m\geq 2$. More precisely, 
  $$\nef(\PP(\Omega^1_{\PP^n})) = \eff(\PP(\Omega^1_{\PP^n})) = \left\langle H, L+2H\right\rangle.$$Moreover, if $X\subseteq \PP^n$ is a smooth subvariety, then $L+2H$ is nef, and ample if and only if $X$ does not contain a line.\end{proposition}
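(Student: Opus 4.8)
The plan is to exploit that $\PP(\Omega^1_{\PP^n})$ is a projective bundle over $\PP^n$, so its Picard group has rank two, generated by $L$ and $H$; both the nef and the effective cone are then two-dimensional, and it suffices to locate their two extremal rays. One ray is immediate: $H=\pi^*\O_{\PP^n}(1)$ is the pullback of an ample class, hence nef, and is the class of an effective (pulled-back) divisor. For the opposite ray I would use the Grassmannian map $f$ of \eqref{Grmap}. Since $f$ maps to the strictly lower-dimensional $\Gr(2,n+1)$, the class $f^*\O_{\Gr(2,n+1)}(1)=L+2H$ is the pullback of an ample class under a morphism, hence nef, and its contracted curves are exactly the fibers of $f$. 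Concretely, the fiber over a line $\ell\subset\PP^n$ is the curve $\gamma_\ell=\{(x,[T_x\ell]):x\in\ell\}\cong\PP^1$, which maps isomorphically onto $\ell$ under $\pi$.

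The heart of the cone computation is to bound the effective cone from below using two covering families of curves. First, the $\gamma_\ell$ sweep out $\PP(\Omega^1_{\PP^n})$ as $\ell$ varies, so any pseudoeffective class is nonnegative on a general $\gamma_\ell$. From $(L+2H)\cdot\gamma_\ell=\O_{\Gr}(1)\cdot f_*\gamma_\ell=0$ and $H\cdot\gamma_\ell=\O_{\PP^n}(1)\cdot\ell=1$ (projection formula) I get $L\cdot\gamma_\ell=-2$, so a pseudoeffective class $aL+bH$ satisfies $b-2a\ge 0$. Second, the lines $\lambda$ inside the $\PP^{n-1}$-fibers of $\pi$ also cover, and $L\cdot\lambda=1$, $H\cdot\lambda=0$ force $a\ge 0$. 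Together these give $\eff(\PP(\Omega^1_{\PP^n}))\subseteq\langle H,L+2H\rangle$. Since $\langle H,L+2H\rangle\subseteq\nef\subseteq\eff$ by the previous paragraph, all three cones coincide. Translating back via $\O_{\PP(\Omega^1_{\PP^n}(m))}(1)=L+mH$, the bundle $\Omega^1_{\PP^n}(m)$ is nef precisely when $(1,m)\in\langle H,L+2H\rangle$, i.e. when $m\ge 2$.

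For the last statement I would pass to $X$ via the surjection $\Omega^1_{\PP^n}|_X\to\Omega^1_X$, which embeds $\PP(\Omega^1_X)$ in $\PP(\Omega^1_{\PP^n}|_X)$; composing with $f$ yields a morphism $f_X:\PP(\Omega^1_X)\to\Gr(2,n+1)$ with $f_X^*\O_{\Gr}(1)=(L+2H)|_{\PP(\Omega^1_X)}$, so $L+2H$ is nef on $\PP(\Omega^1_X)$. Now $L+2H$ is ample if and only if $f_X$ is finite, since for a proper morphism the pullback of an ample class is ample exactly when no curve is contracted. Thus the problem reduces to showing $f_X$ contracts a curve if and only if $X$ contains a line. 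The geometric input, and the step I expect to require the most care, is the fiber analysis of $f_X$: the fiber over $\ell\in\Gr(2,n+1)$ consists of pairs $(x,v)$ with $x\in X\cap\ell$ whose tangent direction $v$ points along $\ell$. If $\ell\not\subseteq X$ then $X\cap\ell$ is finite and each such point contributes at most one direction, so the fiber is finite; whereas if $\ell\subseteq X$ the tautological assignment $x\mapsto(x,[T_x\ell])$ produces an entire $\PP^1$ in the fiber. Hence $f_X$ is finite exactly when $X$ contains no line, which completes the proof.
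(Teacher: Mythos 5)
Your proof is correct and takes essentially the same route as the paper: both rest on the morphism $f\colon \PP(\Omega^1_{\PP^n})\to \operatorname{Gr}(2,n+1)$ with $f^*\O_{\operatorname{Gr}(2,n+1)}(1)=L+2H$, and on factoring it through $\PP(\Omega^1_X)\subset \PP(\Omega^1_{\PP^n}|_X)\subset \PP(\Omega^1_{\PP^n})$, with ampleness equivalent to finiteness of the map, i.e.\ to $X$ containing no line. You merely spell out what the paper leaves implicit (the covering families $\gamma_\ell$ and the fibre lines pinning down the cones, and the fibre analysis for finiteness), and those computations all check out.
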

\begin{proof}The equality of the cones follows from the preceding paragraph. The surjection $\Omega^1_{\PP^n}|_X\to \Omega^1_X$ allows us to factor $f$ as $\PP(\Omega^1_X)\subset \PP(\Omega^1_{\PP^n}|_X)\subset \PP(\Omega^1_{\PP^n})\to \operatorname{Gr}(2,n+1)$. So $L+2H$ is base-point free, and ample if and only if $f$ is finite, i.e., when $X$ does not contain a line.
\end{proof}

Thus, if $X\subset\PP^n$ a smooth subvariety, the slope of the nef cone of $\PP(\Omega^1_X)$ (with respect to $\O_{\PP^n}(1)|_X$) is at most 2. Using results of Section \ref{BMsection}, we can improve this in the case of K3 surfaces. Note first that if $S$ a K3 surface of degree $2t$, then the nef cone is contained in the positive cone $\{D \,| \, D^3\ge 0\}$. However, the boundary divisor $L+\frac{2}{\sqrt{t}}H$ of this cone can in general fail to be nef (see Section 4). Nevertheless, the following shows these two cones approximate each other if $t$ is large. 
\begin{proposition}\label{nefbound}
Let $(S, \O_S(1))$ be a K3 surface of degree $d=2t$ and Picard number one. Then for $t>1$ we have 
\begin{equation*}
\alpha_{n,2t} \leq \frac{2}{\sqrt{t-\frac{5}{4}}}\,\text{ and }\, \alpha_{e,2t} \leq \frac{2}{\sqrt{t}}. 
\end{equation*}
There are similar lower bounds for $\alpha_{n,2t},\alpha_{e,2t}$, showing that they both limit to $\frac{2}{\sqrt{t}}$ for large $t$.
\end{proposition}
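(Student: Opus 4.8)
The plan is to trap both thresholds between the positive cone $\{D : D^3 \ge 0\}$ and bounds coming from Riemann--Roch and from the birational geometry of $S^{[2]}$. Writing $D = L + \alpha H$ and using the intersection numbers recorded above, one computes $D^3 = 6t\alpha^2 - 24$, so the positive cone is $\langle H, L + \tfrac{2}{\sqrt t}H\rangle$. Since a nef class on a threefold has nonnegative top self-intersection, $\nef(\PP(\Omega^1_S))$ is contained in the positive cone, which already gives the lower bound $\alpha_{n,2t} \ge \tfrac{2}{\sqrt t}$. It then remains to produce the two displayed upper bounds and a matching lower bound for $\alpha_{e,2t}$.

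For the pseudoeffective upper bound I would use Riemann--Roch on $S$. Since $\pi_*\O(aL) = \Sym^a\Omega^1_S$ with vanishing higher direct images, $\chi(\PP(\Omega^1_S), aL+bH) = \chi(S, \Sym^a\Omega^1_S \otimes \O_S(b))$, and a direct Chern class computation (using $c_1(\Omega^1_S)=0$, $c_2(\Omega^1_S)=24$, $H^2=2t$) gives $\chi = (a+1)(tb^2 - 4a^2 - 8a + 2)$. The point is that $h^2$ vanishes: by Serre duality and the self-duality $T_S\cong\Omega^1_S$ it equals $h^0(S,\Sym^a\Omega^1_S(-b))$, which is zero for $b>0$ because $\Sym^a\Omega^1_S$ is $\mu$-semistable of slope $0$ (a symmetric power of the stable bundle $\Omega^1_S$ in characteristic zero), so the twist has negative slope and admits no section. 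Hence $h^0\ge\chi$, and for any rational $\alpha=b/a>\tfrac{2}{\sqrt t}$ the class $\chi$ becomes positive after replacing $(a,b)$ by a large multiple; thus $L+\alpha H$ is effective and $\alpha_{e,2t}\le\alpha$ for every such $\alpha$, giving $\alpha_{e,2t}\le\tfrac{2}{\sqrt t}$.

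For the nef upper bound I would restrict the second extremal nef ray of $S^{[2]}$. By Theorem \ref{bm} this ray is $\tH - \beta B$ for an explicit $\beta$ depending on the relevant Pell-type equations, and since $\tH|_E = 2H$ and $B|_E = -L$, its restriction $\beta L + 2H$ is a nef class on $E$ of slope $2/\beta$, so $\alpha_{n,2t}\le 2/\beta$. A short case check then shows $\beta^2 > t-\tfrac54$ in every case: $\beta^2 = t$ when $t$ is a square; $\beta^2 = t - t/a^2 > t-1$ when $x^2-ty^2=1$ has minimal solution $(a,b)$ (using $a^2 = tb^2+1\ge t+1$); and $\beta^2 = t - 5t/c^2 > t-\tfrac54$ when $x^2-4ty^2=5$ has minimal solution $(c,d)$ (using $c^2 = 4td^2+5>4t$). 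Therefore $\alpha_{n,2t}\le 2/\beta < 2/\sqrt{t-5/4}$.

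Finally, for the lower bound on $\alpha_{e,2t}$ I would pair against the square of a nef class. If $N$ is nef on $E$ and $P$ is effective then $P\cdot N^2 = (N|_P)^2\ge 0$, and this persists for pseudoeffective $P$ by continuity. Taking $N = L + s_0 H$ with $s_0 = 2/\beta$ the nef slope found above (so $s_0 \in [\tfrac{2}{\sqrt t}, \tfrac{2}{\sqrt{t-5/4}})$) and $P = L+\alpha H$, the inequality $(L+\alpha H)\cdot N^2\ge 0$ unwinds to $\alpha\ge \tfrac{24-2ts_0^2}{4ts_0}$; as the right-hand side is decreasing in $s_0$, substituting the extreme value $s_0 = \tfrac{2}{\sqrt{t-5/4}}$ gives $\alpha_{e,2t}\ge \tfrac{2t-15/4}{t\sqrt{t-5/4}}$, which tends to $\tfrac{2}{\sqrt t}$. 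Together with the upper bounds this squeezes both thresholds to $\tfrac{2}{\sqrt t}$. I expect the lower bounds to be the main obstacle: Riemann--Roch controls $\chi$ but not $h^0$ on the non-effective side (there is no grip on $h^1$), so these estimates cannot come from cohomology and instead require genuine positivity input, namely the nef${}^2$ pairing fed by the explicit nef divisor supplied by Bayer--Macr\`i; verifying the vanishing $h^2=0$ via stability is the other spot that needs care.
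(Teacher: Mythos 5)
Your proposal is correct, and it splits into parts that match the paper and one part that is genuinely different. The nef upper bound (restricting the Bayer--Macr\`i extremal nef class $\tH-\beta B$ to $E$ and checking $\beta^2>t-\frac54$ in the three Pell cases) and the lower bound for $\alpha_{e,2t}$ (pairing a pseudoeffective class against $N^2$ for the nef class $N=\sqrt{t-\frac54}\,L+2H$, which yields exactly the paper's $\frac{8t-15}{2t\sqrt{4t-5}}$) are the same arguments as in the paper, and your algebra checks out at every step. The genuine divergence is the pseudoeffective upper bound $\alpha_{e,2t}\le\frac{2}{\sqrt t}$: the paper again uses Bayer--Macr\`i, observing that $\tH-\sqrt t B$ is pseudoeffective on $S^{[2]}$ in all cases (it is the extremal nef ray with $(D|_E)^3=0$ when $t$ is a square; otherwise the minimal solution of $x^2-ty^2=1$ satisfies $\frac{b}{a}<\frac{1}{\sqrt t}$) and then restricting to $E$, whereas you compute $\chi(S,\Sym^a\Omega^1_S(b))=(a+1)(tb^2-4a^2-8a+2)$ and kill $h^2$ via Serre duality, $T_S\cong\Omega^1_S$, and semistability of $\Sym^a\Omega^1_S$ (slope zero, symmetric power of a stable bundle in characteristic zero). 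Your route is more elementary and arguably more robust: it needs no Bayer--Macr\`i input for this half, and it sidesteps a point the paper glosses over, namely that the restriction of a pseudoeffective class to the prime divisor $E$ remains pseudoeffective --- a priori this could fail if $E$ occurred in the negative part of the divisorial Zariski decomposition of $\tH-\sqrt t B$ (it does not, but the paper does not say why). What the paper's restriction argument buys in exchange is structural information exploited later (extremality of the restricted ray, as in Corollary \ref{proplagrangian}), which your asymptotic $\chi$-count cannot see: it only produces sections of multiples of $aL+bH$ with $b/a>2/\sqrt t$, i.e.\ $\QQ$-effectivity strictly above the threshold, and says nothing about the boundary class itself. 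One cosmetic point: in your case list for $\beta$, when \eqref{Pell} is solvable the Bayer--Macr\`i nef boundary is the Pell-type one irrespective of whether $t$ is a square; this causes no gap, since for $t>1$ a square $t$ forces \eqref{Pell} to be unsolvable (factor $(x-2\sqrt t\,y)(x+2\sqrt t\,y)=5$), so your three computations do exhaust all cases.
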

\begin{proof}
   From Theorem \ref{bm} the nef cone of $S^{[2]}$ is determined by the fundamental solution to the Pell-type equation $x^2-4ty^2=5$. Assume first that this has no solutions. In this case the nef and movable cones agree. First, if $t$ is a square then the divisor $\tH-\sqrt{t}B$ is the boundary of the nef cone. If $t$ is not a square, for a solution $(x,y)$ of $x^2-ty^2=1$, we must have $x \geq \sqrt{t}$. Rewriting the equation as $$\frac{y}{x}=\frac{1}{\sqrt{t}}\sqrt{\frac{x^2-1}{x^2}}$$shows that
$\frac{y}{x}\ge \frac{\sqrt{t-1}}{t}$, and so we find  that the divisor $\tH-\sqrt{t-1}B$ is nef. If $x^2-4ty^2=5$ has a solution, then for such a solution $(x_1,y_1)$ with $x_1>0$ minimal and $y_1>0$, the boundary of the nef cone is given by $\tH-2t\frac{y_1}{x_1}B$. It must be that $x_1\geq2\sqrt{t}$ from which the Pell-type equation gives $$\frac{y_1}{x_1}\geq \frac{\sqrt{4t-5}}{4t}.$$ This implies that $\tH-aB$ is nef for $a=\frac{\sqrt{4t-5}}{2}$. From this we see that regardless of whether $x^2-4ty^2=5$ has a solution or not, the divisor $(\tH-\sqrt{t-\frac{5}{4}}B)\big|_E=2H+\sqrt{t-\frac{5}{4}}L$ is nef on $E$ which implies the result.

	For the effective slope, if $t$ is a square then from Theorem \ref{bm} we have that $D=\tH-\sqrt{t}B$ is nef and extremal on $S^{[2]}$, and since $D^3=0$, the same is true for the restriction $L+\frac{2}{\sqrt{t}}H$ on $E$. If $t$ is not a square, then working as above we see that any minimal solution $(a,b)$ to this equation must satisfy $\frac{b}{a}<\frac{1}{\sqrt{t}}$, which means that $\tH-\sqrt{t}B$ is pseudoeffective on $S^{[2]}$ and hence so is its restriction $L+\frac{2}{\sqrt{t}}H$.
    
    The lower bound from $\alpha_{e,2t}$ comes from the fact that an effective divisor $D$ must satisfy the inequality $D(\sqrt{t-\frac{5}{4}}L+2H)^2\ge 0$. This gives $\frac{8 t-15}{2 t \sqrt{4 t-5}}<\alpha_{e,2t}$, and this expression is asymptotic to $\frac{2}{\sqrt t}$. There is a similar lower bound for $\alpha_{n,2t}$ obtained by squaring a movable class on $S^{[2]}$.
\end{proof}
We will see below that the bound for $\alpha_{e,2t}$ is actually attained for infinitely many $d$. First, we prove the following result for the nef cone:

\begin{theorem}\label{theonefcone}
    Let $(S,\O_S(1))$ be a K3 surface of degree $2t$  and Picard number one. Assume that either $t$ is a square or that $x^2-4ty^2=5$ has a solution. Then the restriction map
    \begin{equation}\label{restrictionnef}
    \nef(S^{[2]})\to \nef(E)
    \end{equation}is an isomorphism.
\end{theorem}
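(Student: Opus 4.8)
The plan is to view the restriction as the linear map $r\colon N^1(S^{[2]})_\RR\to N^1(E)_\RR$ determined by $\tH\mapsto 2H$ and $B\mapsto -L$. Since $\{2H,-L\}$ is a basis of $N^1(E)_\RR$, the map $r$ is a linear isomorphism, and because the restriction of a nef class to a subvariety is again nef, we have $r(\nef(S^{[2]}))\subseteq\nef(E)$. As both cones are two-dimensional, with $\nef(S^{[2]})=\langle\tH,N\rangle$ (where $N$ is a generator of the second extremal ray, as in Theorem \ref{bm}) and $\nef(E)=\langle H,L+\alpha_{n,d}H\rangle$, it suffices to match boundary rays. One is immediate: $r(\tH)=2H$ spans the $H$-boundary of $\nef(E)$, which is extremal because $H=\pi^*\O_S(1)$ contracts the fibres of $\pi\colon E\to S$. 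So everything reduces to showing that $r(N)=N|_E$ spans the \emph{second} boundary ray of $\nef(E)$; equivalently, since $N|_E$ is already nef, that $N|_E$ is not ample.

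To produce a curve on which $N|_E$ has degree zero, I would invoke the minimal model description of $S^{[2]}$ recorded after Theorem \ref{bm}. Under the hypothesis, $N$ is the class of a contraction of $S^{[2]}$ admitting a two-dimensional subvariety $P$ on which $N$ restricts trivially: if $t$ is a square and \eqref{Pell} has no solution, then $N=\tH-\sqrt t B$ defines the Lagrangian fibration $S^{[2]}\to\PP^2$ and $P$ is a general fibre; if \eqref{Pell} has a solution, then $N=\tH-2t\tfrac{d}{c}B$ contracts a Lagrangian plane $P=\PP^2$ in the first Mukai flop. In either case $N|_P\equiv 0$, so every curve contained in $P$ has $N$-degree zero. (In the Lagrangian case one may alternatively observe that $N|_E=\sqrt t\,(L+\tfrac{2}{\sqrt t}H)$ is isotropic, $(N|_E)^3=0$, hence not big and so not ample.)

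It then remains to exhibit a curve inside $E\cap P$. For this I would compute the class of the extremal curve $\ell$ generating the Mori ray dual to $N$: it satisfies $N\cdot\ell=0$, while $\tH\cdot\ell>0$ since $\ell$ is not the Hilbert--Chow fibre class dual to $\tH$. Writing $\ell$ in the basis of $N_1(S^{[2]})_\RR$ dual to $(\tH,B)$ and using that the coefficient of $B$ in $N$ is negative, the relation $N\cdot\ell=0$ together with $\tH\cdot\ell>0$ forces $B\cdot\ell>0$, hence $E\cdot\ell=2B\cdot\ell>0$. Because every curve in $P$ has class in the line $\RR\ell=\{C: N\cdot C=0\}$, this shows $E$ is not disjoint from $P$: either $P\subseteq E$, or $E|_P$ is a nonzero effective divisor on $P$. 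Either way $E\cap P$ contains a curve $\Gamma$, and $N|_E\cdot\Gamma=N\cdot\Gamma=0$. Thus $N|_E$ is nef but not ample, and as it is not proportional to $H$ it spans the second boundary ray of $\nef(E)$.

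Putting these together, $r$ carries the two boundary rays of $\nef(S^{[2]})$ onto the two boundary rays of $\nef(E)$, so the linear isomorphism $r$ maps $\langle\tH,N\rangle$ onto $\langle 2H,N|_E\rangle=\nef(E)$, giving the claimed isomorphism of cones. The main obstacle is the final geometric input: guaranteeing that the contracted surface $P$ actually meets $E$ in a curve rather than in isolated points or not at all. This is exactly what the positivity $E\cdot\ell>0$ secures, and it is the step where the precise Bayer--Macr\`i contraction (a Lagrangian fibre, respectively a flopped $\PP^2$) is genuinely used.
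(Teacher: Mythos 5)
Your proposal is correct and takes essentially the same route as the paper: the paper works dually, proving $i_*\colon \operatorname{NE}(E)\to\operatorname{NE}(S^{[2]})$ is surjective via the same Debarre--Macr\`i trichotomy, excluding the divisorial contraction by the hypothesis on $t$, handling the Lagrangian case by $(D|_E)^3=0$, and handling the flop case by showing $P\cap E\neq\emptyset$, which is exactly the contrapositive of your computation $E\cdot\ell=2B\cdot\ell>0$ (if $P\cap E=\emptyset$ then $B\cdot\ell=0$, forcing $\tH\cdot\ell=0$, impossible since Hilbert--Chow does not contract $\ell$). The only difference is bookkeeping: you stay on the divisor side and conclude $N|_E$ is nef but not ample, while the paper phrases the same geometric input through the dual cones of curves.
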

\begin{proof}
Dually, it suffices to prove that the inclusion map $i:E\to S^{[2]}$ induces a surjective map on the cones of curves
    $i_*:\operatorname{NE}(E)\to \operatorname{NE}(S^{[2]})$. In the Picard number one case,  $\operatorname{NE}(S^{[2]})$ is generated by
    a fibre of the Hilbert--Chow contraction (which is already contained in $E$) and some other extremal class $l$. We use
    the description of the nef cone of $S^{[2]}$ due to Bayer--Macr\`i to show that $l=i_*R$ for some effective 1-cycle  $R\in
    \operatorname{NE}(E)$. From \cite[Proposition 3.6]{debarremacri} there are three cases to consider
    \begin{enumerate}[(i)]
        \item There is a primitive integral nef class $D$ with square zero with respect to the Beauville--Bogomolov--Fujiki form. Here $l$ corresponds to a
        curve in one of the fibres of the associated Lagrangian fibration $S^{[2]}\to \PP^2$.
        \item There is a Mukai flop $S^{[2]}\dashrightarrow X^+$ and the class $l$ corresponds to a line in the flopped
        projective plane $P=\PP^2$.
        \item There is a divisorial contraction $\pi:S^{[2]}\to Z$ contracting a divisor $D$, and $l$ corresponds to a fibre
        of $\pi|_D:D\to Y$.
    \end{enumerate}  The conditions on $t$ imply in particular that we are not in case (iii), so we consider the other two. In case (i) we have $D|_E^3=0$ and so $D$ is also extremal on $E$, meaning the restriction map
    \eqref{restrictionnef} is indeed an isomorphism. 
    
    In case (ii), note first that $P\cap E\neq \emptyset$: If not then the class of a line would have to also satisfy $\tH \cdot \ell=0$, but this does not happen though as the Hilbert--Chow morphism does not contract $\ell$.  This means that $P$ and $E$ intersect along a curve $R$, and $\ell$ equals a multiple of $i_*R$. 
\end{proof}

\begin{remark} \label{quarticremark}
    In case (iii), the map \eqref{restrictionnef} is not always an isomorphism. This is because the divisorial
    contraction $S^{[2]}\to Z$ may restrict to a finite map on $E$. This happens for instance when the degree is four or six (see Sections \ref{sectiondeg4} and \ref{sectiondeg6}).
\end{remark}

We now turn to the problem of computing the pseudoeffective cone of $\PP(\Omega^1_S)$ of a K3 surface $S$ in some
special situations. First we observe that if the Hilbert scheme admits a Lagrangian fibration $S^{[2]}\to\PP^2$, i.e.,
when $t\ge 2$ is a square and Equation \eqref{Pell} has no solution, we have that the second extremal divisors of
$\eff(S^{[2]})$ and $\nef(S^{[2]})$ agree, and since $E$ is mapped to a lower-dimensional variety, this extremal divisor
$\tH-\sqrt{t}B$ restricts to an extremal ray of both $\eff(E)$ and $\nef(E)$. In particular, the above bound of
Proposition \ref{nefbound} is optimal for infinitely many values of $d$.

\begin{corollary}\label{proplagrangian}
    Let $(S,\O_S(1))$ be a K3 surface of degree $2t$ and assume that $t$ is a square and that $x^2-4ty^2=5$ has no solutions. Then
    $$ \eff(\PP(\Omega^1_S)) = \nef(\PP(\Omega^1_S))=\left\langle H, L+\frac{2}{\sqrt{t}}H\right\rangle. $$
\end{corollary}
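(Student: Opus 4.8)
The strategy is to squeeze both $\nef(E)$ and $\eff(E)$ between the cone $C:=\left\langle H, L+\tfrac{2}{\sqrt{t}}H\right\rangle$ and itself: the cone $C$ is generated by two nef classes, so $C\subseteq\nef(E)\subseteq\eff(E)$, while $\eff(E)$ will be cut out from above by pairing against two moving curve classes. Throughout I write a divisor class on $E=\PP(\Omega^1_S)$ as $aL+bH$ and use the intersection numbers $L^3=-24$, $L^2H=0$, $LH^2=2t$, $H^3=0$.

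First I record that both generators of $C$ are nef. The class $H=\pi^*\O_S(1)$ is the pullback of an ample class, hence nef. For the second generator, the hypotheses ($t$ a perfect square and \eqref{Pell} with no solution) put us in case $(1)(a)$ of Theorem \ref{bm}, where $D:=\tH-\sqrt{t}B$ is the common second extremal ray of $\nef(S^{[2]})=\mov(S^{[2]})$ and $\eff(S^{[2]})$ and induces a Lagrangian fibration $g\colon S^{[2]}\to\PP^2$. As $D$ is nef and restriction of a nef class is nef, the class $D|_E=2H+\sqrt{t}L=\sqrt{t}\bigl(L+\tfrac{2}{\sqrt{t}}H\bigr)$ is nef, using $\tH|_E=2H$ and $B|_E=-L$. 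Hence $C\subseteq\nef(E)\subseteq\eff(E)$, the last inclusion being the general containment of the nef cone in the pseudoeffective cone. (This identification of $\nef(E)$ also follows directly from Theorem \ref{theonefcone}, which applies since $t$ is a square.)

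The heart of the argument is the reverse inclusion $\eff(E)\subseteq C$. Here the restriction map on divisors is of no use, since $B|_E=-L$ is not even pseudoeffective, so I argue on the dual (curve) side, using that a general member of a covering family of curves meets every pseudoeffective divisor nonnegatively. The fibres of $\pi\colon E\to S$ form a covering family with class $H^2$, and restricting the Lagrangian fibration gives a morphism $g|_E\colon E\to\PP^2$ whose general fibre has class $(D|_E)^2=(\sqrt{t}L+2H)^2$; both sweep out $E$. A direct computation gives
\begin{align*}
H^2\cdot(aL+bH) &= 2ta, \\
(D|_E)^2\cdot(aL+bH) &= 8t\,(\sqrt{t}\,b-2a),
\end{align*}
so any pseudoeffective $aL+bH$ satisfies $a\ge 0$ and $\sqrt{t}\,b\ge 2a$, i.e. $\eff(E)\subseteq\{a\ge 0,\ \sqrt{t}\,b\ge 2a\}=C$. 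Combined with $C\subseteq\nef(E)\subseteq\eff(E)$, all three cones coincide, which is the assertion of the corollary.

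The one step requiring genuine input, rather than formal bookkeeping, is the movability of $(D|_E)^2$: I must know that $g|_E$ is dominant, so that its fibres genuinely cover $E$. This is where the Lagrangian fibration is indispensable, and it is confirmed by the computation $(D|_E)^2\cdot L=-16t\neq 0$, which forces $g(E)$ to be two-dimensional, hence all of $\PP^2$; equivalently $D|_E$ is semiample of numerical dimension two ($(D|_E)^3=0$ but $(D|_E)^2\neq 0$), so its square is represented by a covering family of curves. Everything else reduces to pairing the explicit classes $H^2$ and $(D|_E)^2$ against the basis $L,H$ using the stated intersection numbers.
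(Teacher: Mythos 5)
Your proof is correct and is essentially the paper's own argument: both rely on Theorem \ref{bm}(1)(a) to produce the nef class $D=\tH-\sqrt{t}B$ inducing the Lagrangian fibration $S^{[2]}\to\PP^2$, restrict it to $E$, and exploit the fact that $E$ maps onto the lower-dimensional $\PP^2$ to get extremality. Your pairing of $aL+bH$ against the covering curve classes $H^2$ and $(D|_E)^2$ is just an explicit dual rendering of the paper's compressed observation that the nef, non-big classes $H$ and $D|_E$ (both with vanishing top self-intersection) must span the boundary rays of $\eff(E)$, so that all three cones coincide.
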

\noindent We remark, however that $\nef(E)$ need not equal $\langle H, L+\frac{2}{\sqrt{t}}B\rangle$ in general - this happens for instance for quartic surfaces (see below).

We will now focus our attention on K3 surfaces $S$ of Picard number one and degree $d=2t$ so that $S^{[2]}$ has two
divisorial contractions, namely the Hilbert--Chow and a contraction to a normal variety $Y$. We denote the extremal divisor of this contraction by $D\subset S^{[2]}$. As pointed out in an earlier section, the image of $D$ via the contraction is a smooth K3 surface $T\subset Y$. 

\begin{proposition}\label{theoinvolution}
    Let $(S,\O_S(1)$ be a K3 surface of degree $2t$ and Picard number one. Assume that $t$ is not a square and
    $x^2-4ty^2=5$ has no solutions\footnote{Equivalently $S^{[2]}$ has two divisorial contractions, see the discussion
    after Theorem \ref{bm}.}. In the notation above, assume furthermore that the contracted divisor is
    $D\simeq\PP(\Omega^1_T)$ for $T$ a K3 surface. Then $$\eff(E) = \langle D|_E, H\rangle.$$
\end{proposition}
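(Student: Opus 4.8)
The plan is to exhibit two specific effective divisor classes on $E=\PP(\Omega^1_S)$ that generate the stated cone and then show no effective class can lie outside the cone they span. One boundary ray is clear: the class $H=\pi^*\O_S(1)$ is effective (it is the pullback of an ample class on $S$), and since $H^3=0$ it is extremal in $\eff(E)$. The nontrivial boundary ray should be the restriction $D|_E$ of the extremal divisor $D$ of the second divisorial contraction $S^{[2]}\to Y$. First I would use the hypothesis $D\simeq\PP(\Omega^1_T)$ together with the fact that $\HC$ and the second contraction are the two divisorial contractions of $S^{[2]}$, to write $D$ in the basis $\{\tH,B\}$ via Theorem \ref{bm}(1)(b): since $x^2-4ty^2=5$ has no solution and $t$ is not a square, the second extremal ray of $\eff(S^{[2]})$ is $\tH-\frac{a}{b}B$ for the minimal solution $(a,b)$ of $x^2-ty^2=1$. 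Restricting using $\tH|_E=2H$ and $B|_E=-L$ then gives $D|_E$ as an explicit combination of $L$ and $H$, which computes the slope $\alpha_{e,2t}$.

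Next I would argue that $D|_E$ is genuinely effective and extremal on $E$. Effectivity is automatic since $D|_E$ is the restriction of the effective (indeed irreducible, contracted) divisor $D$, and $E\not\subseteq D$ because the two contractions are distinct, so the intersection $D\cap E$ is a proper effective divisor in the class $D|_E$. For extremality I would exploit the identification $D\simeq\PP(\Omega^1_T)$: the restriction of $D$ to itself, $D|_D=N_{D/S^{[2]}}$, is the relative hyperplane class of the $\PP^1$-fibration $D\to T$ up to a twist, so $D$ is contracted in the direction of the fibre class $l$. Pairing $D|_E$ against the curve class along which $D\cap E$ meets the fibres of $D\to T$, or equivalently observing that the contraction $S^{[2]}\to Y$ restricted to $E$ cannot increase the effective cone, forces $D|_E$ to lie on the boundary of $\eff(E)$ rather than in its interior.

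To upgrade these two ray classes to the full cone I would use the sandwiching already available from Proposition \ref{nefbound}: the effective slope satisfies $\alpha_{e,2t}\le \tfrac{2}{\sqrt t}$, while $H$ accounts for the other generator $\alpha=\infty$ side. The key point is that the slope produced by $D|_E$ realises the upper bound $\tfrac{2}{\sqrt t}$ exactly, since $\tH-\frac{a}{b}B$ with $\frac{b}{a}<\frac{1}{\sqrt t}$ restricts to a class whose slope is pinned between the lower bound computed in Proposition \ref{nefbound} and $\tfrac{2}{\sqrt t}$; combined with the general fact that $\eff(E)$ is a two-dimensional cone bounded below by $H$, any effective class must pair non-negatively against the nef class $\sqrt{t-\tfrac54}L+2H$ of Proposition \ref{nefbound}, so no effective class can have slope strictly exceeding that of $D|_E$. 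Hence $\eff(E)=\langle D|_E,H\rangle$.

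The main obstacle I anticipate is the extremality of $D|_E$, i.e.\ ruling out that $D\cap E$ decomposes or that its class lies strictly inside the cone. The clean way around this is the structural input $D\simeq\PP(\Omega^1_T)$: this identifies the normal bundle and the fibration $D\to T$ explicitly, so that the curve class contracted by $S^{[2]}\to Y$ can be pushed into $\operatorname{NE}(E)$ and paired with $D|_E$ to certify that $D|_E$ sits on the extremal boundary. Verifying that this pairing vanishes (so the ray is extremal and not merely effective) is the delicate computation, and it is exactly where the hypothesis on the geometry of the contracted divisor is used rather than just the numerical Pell-equation conditions.
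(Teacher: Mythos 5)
Your proposal does not reach the statement: both of the mechanisms you offer for extremality of $D|_E$ fail. First, the numerical sandwich in your third paragraph is false. Writing $D$ proportionally to $\tH-\frac{a}{b}B$ with $(a,b)$ the minimal solution of $x^2-ty^2=1$, the restriction is proportional to $L+\frac{2b}{a}H$, and since $a^2-tb^2=1$ with $t$ not a square one has $\frac{b}{a}<\frac{1}{\sqrt t}$ \emph{strictly}, so the slope $\frac{2b}{a}$ does not realise the upper bound $\frac{2}{\sqrt t}$ of Proposition \ref{nefbound}. Conversely, pairing a putative effective class against the nef class $\sqrt{t-\frac54}\,L+2H$ only yields the lower bound $\frac{8t-15}{2t\sqrt{4t-5}}$ recorded in Proposition \ref{nefbound}, which is far below $\frac{2b}{a}$: for $t=2$ it gives roughly $0.14$, while the slope of $D|_E$ is $\frac43$. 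So no intersection-theoretic sandwich pins the effective threshold at $D|_E$; if it did, the hypothesis $D\simeq\PP(\Omega^1_T)$ would be superfluous, whereas you correctly sense it must be used. Second, your curve-pairing argument has no curves to pair with: the fibres $l$ of $D\to T$ satisfy $D\cdot l<0$ (not $0$), and more importantly they need not lie in $E$ at all. Indeed $D\cap E\to T$ can be finite --- this is exactly the phenomenon of Remark \ref{quarticremark} and the degree $4$ and $6$ analyses (Sections \ref{sectiondeg4}, \ref{sectiondeg6}), where the second contraction restricts to a \emph{finite} map on $E$. Hence ``the contraction restricted to $E$ cannot increase the effective cone'' is not a valid inference, and producing a movable curve class on $E$ orthogonal to $D|_E$ is essentially equivalent to the proposition itself.

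The missing idea is cohomological, not numerical. The paper proves extremality by showing $h^0(E,mD|_E)$ stays bounded (so $D|_E$ is not big, and being pseudoeffective and not proportional to $H$ it must span the second boundary ray). Since $h^0(S^{[2]},mD)=1$ ($D$ is exceptional), the sequence $0\to\O_{S^{[2]}}(mD-E)\to\O_{S^{[2]}}(mD)\to\O_E(mD)\to0$ reduces this to bounding $h^1(S^{[2]},mD-E)$, which in turn reduces, via $0\to\O_{S^{[2]}}((m-1)D-E)\to\O_{S^{[2]}}(mD-E)\to\O_D(mD-E)\to0$, to the vanishing $\HH^1(D,\O_D(mD-E))=0$ for $m\gg0$. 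This is where the hypothesis $D\simeq\PP(\Omega^1_T)$ enters concretely: adjunction gives $\O_D(D)\simeq\omega_D=\O_{\PP(\Omega^1_T)}(-2)$, and with $E|_D=\O_{\PP(\Omega^1_T)}(a)\otimes p^*\O_T(b)$, $a,b>0$, the Leray spectral sequence identifies the group with $\HH^0(T,\Sym^{2m+a-2}\Omega^1_T\otimes\O_T(-b))$, which vanishes for $m\gg0$ by Nakayama's theorem that $\Omega^1_T$ is not pseudoeffective on a K3 surface. Your proposal never invokes this non-pseudoeffectivity (nor any section-growth estimate), and without it the identification of the normal bundle of $D$ that you do mention gives no control over $\eff(E)$.
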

\begin{proof}
    We want to show that $D|_E$ is extremal in the pseudoeffective cone, or equivalently that $h^0(E, mD|_E)$ grows at
    most quadratically in $m$. First note that $h^0(S^{[2]},mD)=1$, since $D$ is an exceptional divisor. From the exact sequence
    $$0\to \O_{S^{[2]}}(mD-E)\to \O_{S^{[2]}}(mD)\to \O_E(mD)\to 0$$
    we see that it suffices to show that $h^1(S^{[2]},mD-E)$ is bounded as a function of $m$, namely that for
    $m$ large enough the dimension of this vector space stabilises; this will imply that $mD|_E$ can only have one
    section. For this, we consider the sequence
    $$0\to \O_{S^{[2]}}((m-1)D-E)\to \O_{S^{[2]}}(mD-E)\to \O_D(mD-E)\to 0.$$
    It suffices to prove that $\HH^1(D,\O_D(mD-E))=0$ for $m$ large, because then the maps
    $$\HH^1(S^{[2]},\O_{S^{[2]}}((m-1)D-E))\longrightarrow \HH^1(S^{[2]},\O_{S^{[2]}}(mD-E))$$ are eventually surjective, and so the dimensions 
    $h^1(S^{[2]},mD-E)$ must stabilise. Let $p:D\to T$ the
    projection and write $E|_D=\O_{\PP(\Omega^1_T)}(a)\otimes p^*\O_T(b)$ for positive integers $a,b$. Note that by adjunction we have $\O_D(D)\simeq \omega_D=O_{\PP(\Omega_T)}(-2)$. Using the Leray spectral sequence we now find
    \begin{eqnarray*}
        \HH^1(D,\O_D(mD-E))&=&\HH^1(\PP(\Omega^1_T),\O_{\PP(\Omega^1_T)}(-(2m+a))\otimes p^*\O_T(-b)) \\
        &=&\HH^0(T,(\Sym^{2m+a-2}\Omega^1_T)\otimes \O_T(-b)),
    \end{eqnarray*}
and this is zero for $m\gg 0$, since $\Omega^1_T$ is not pseudoeffective.
\end{proof}
\begin{remark}
    The above method of proof cannot work in general as Hassett and Macr\`i explained to us that in degree 16, the second extremal divisor is not even isomorphic to a projective bundle.
\end{remark}

A classical case where the assumptions are satisfied is when $S^{[2]}$ admits an involution (which is not induced from an involution of $S$); this happens for instance when $S$ is a quartic surface (see Section \ref{sectiondeg4} for details). In fact, the main theorem of \cite{bcnws}, gives a complete classification of the degrees for which $S^{[2]}$ admits an involution which acts non-trivially on the cones of divisors. In this case the second extremal effective divisor $D$ of $S^{[2]}$ is simply the image of $E$ under the involution.

More generally, the proposition applies in the case of ambiguous Hilbert schemes of Hassett (see \cite[Proposition 6.2.2]{hassett:2000}). Here we say that $S^{[2]}$ is \textit{ambiguous} if there exists a smooth K3 surface $T$ so that either $T$ is not isomorphic to $S$ yet has 
$$T^{[2]}\simeq S^{[2]},$$
or $T$ is isomorphic to $S$ but there exists an isomorphism $S^{[2]}\simeq T^{[2]}$ not induced by one between $T$ and $S$. In either case we have $D\simeq\PP(\Omega^1_T)$. In fact there exists a classification in \cite[Proposition 3.14]{debarremacri} or
\cite[Theorem 7.3]{zuffetti}, depending only on $d$, giving when $S$ is ambiguous - namely for $(S,\O_S(1))$ a K3 surface of Picard number one and degree $2t$, then $S^{[2]}$ is ambiguous if and only if the three listed conditions of the following theorem are fulfilled. To summarise one obtains the following.

\begin{theorem}\label{effconecor}
    Let $(S,\O_S(1))$ be a K3 surface of degree $2t$ and Picard number one. Assume the following three conditions
    hold 
    \begin{enumerate}[$(1)$]
        \item $t$ is not a square, 
        \item $x^2-4ty^2=5$ has no solutions, 
        \item the minimal solution $(a,b)$ of $x^2-ty^2=1$ has $b$ even.
    \end{enumerate}
    Then $\eff(E)=\langle H, D|_E\rangle$ where $D$ the second extremal divisor from $\eff(S^{[2]})$.
\end{theorem}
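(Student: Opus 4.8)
The plan is to reduce the statement to Proposition \ref{theoinvolution}. Under hypotheses (i) and (ii) we are exactly in the setting of that proposition: by the discussion following Theorem \ref{bm}, conditions (i)--(ii) guarantee that the second boundary of $\eff(S^{[2]})$ comes from a genuine divisorial contraction $S^{[2]}\to Y$ (not a Lagrangian fibration nor a flop), contracting a prime divisor $D$ whose class spans the ray $\langle \tH-\tfrac{a}{b}B\rangle$, where $(a,b)$ is the minimal solution of $x^2-ty^2=1$. Thus the only thing left to verify in order to invoke Proposition \ref{theoinvolution} is that this $D$ is of the form $\PP(\Omega^1_T)$ for a K3 surface $T$. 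I would first record that the primitive integral generator of this ray is $w=b\tH-aB$, which is primitive in $H^2(S^{[2]},\ZZ)$ since $\gcd(a,b)=1$ and $\tH$ is primitive in $H^2(S,\ZZ)$.

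The heart of the argument is a lattice computation pinning down when this contraction is of Hilbert--Chow type for an auxiliary K3 surface $T$, i.e.\ when $S^{[2]}$ is \emph{ambiguous}. Working in $H^2(S^{[2]},\ZZ)=H^2(S,\ZZ)\oplus\ZZ B$ with the Beauville--Bogomolov form (so $q(\tH)=2t$, $q(B)=-2$, $q(\tH,B)=0$), I would compute $q(w)=2tb^2-2a^2=-2(a^2-tb^2)=-2$, using $a^2-tb^2=1$. For the divisibility, note that $H^2(S,\ZZ)$ is unimodular and $\tH$ is primitive there, so pairing $w$ against $H^2(S,\ZZ)$ produces the ideal $b\ZZ$, while $q(w,B)=2a$; as $\gcd(a,b)=1$ this gives
$$\operatorname{div}(w)=\gcd(b,2a)=\gcd(b,2).$$
Hence $w$ has square $-2$ and divisibility $2$ precisely when $b$ is even, which is condition (iii); otherwise $\operatorname{div}(w)=1$. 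Since the Hilbert--Chow class $B$ itself has invariant $\bigl(q,\operatorname{div}\bigr)=(-2,2)$, condition (iii) says exactly that $w$ and $B$ share the same monodromy invariant.

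With this in hand, I would invoke Markman's description of the monodromy group of $K3^{[2]}$-type manifolds together with the hyperk\"ahler Torelli theorem: a primitive $(-2)$-class of divisibility $2$ is, up to monodromy, the half-exceptional class of a Hilbert--Chow contraction, so a monodromy operator carrying $w$ to the Hilbert--Chow class of $T^{[2]}$ (for the K3 surface $T$ determined by $w^\perp\cong H^2(T,\ZZ)$) lifts to an isomorphism $S^{[2]}\simeq T^{[2]}$ identifying $D$ with the exceptional divisor $E_T=\PP(\Omega^1_T)$. This is precisely the content of the ambiguity classification of \cite[Proposition 3.14]{debarremacri} and \cite[Theorem 7.3]{zuffetti}, which I would cite for the statement that (i)--(iii) hold if and only if $S^{[2]}$ is ambiguous, in which case $D\simeq\PP(\Omega^1_T)$. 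Feeding this back into Proposition \ref{theoinvolution} gives $\eff(E)=\langle H,D|_E\rangle$, as claimed.

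The clean, checkable part of the proof is the divisibility computation, which isolates condition (iii) as the parity obstruction. The genuine obstacle is the geometric upgrade in the last step: passing from the equality of monodromy invariants of the classes $w$ and $B$ to an actual isomorphism of Hilbert schemes under which the contracted divisor becomes $\PP(\Omega^1_T)$, and checking that the divisorial wall is of Hilbert--Chow type rather than some other $(-2)$-divisorial contraction. This requires the full force of Markman's monodromy theorem and the Torelli theorem for irreducible holomorphic symplectic manifolds, which I would not reprove but instead invoke through the classification above.
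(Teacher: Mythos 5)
Your proposal is correct and takes essentially the same route as the paper, which likewise proves the theorem by combining Proposition \ref{theoinvolution} with the ambiguity classification of \cite[Proposition 3.14]{debarremacri} and \cite[Theorem 7.3]{zuffetti}, identifying conditions (i)--(iii) with $S^{[2]}$ being ambiguous and hence the contracted divisor with $\PP(\Omega^1_T)$ for a K3 surface $T$. The only difference is that you additionally unpack the lattice computation behind condition (iii) --- that the primitive generator $w=b\tH-aB$ satisfies $q(w)=-2$ and $\operatorname{div}(w)=\gcd(b,2)$, matching the Hilbert--Chow invariants exactly when $b$ is even --- a verification the paper delegates entirely to the cited classification.
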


\section{Examples in low degrees}\label{sectionexamples}

We work through the first four cases of degree $d=2t$ Picard number one K3 surfaces, in each case applying and extending the results of the previous section and explain the underlying geometry of the numbers. Note that some values of $\alpha_{e,d},\alpha_{n,d}$ of low degree K3s were computed in \cite{oguisopeternell}, although their results are incorrect in the degree 4 case. We correct this below and study the problem for K3 surfaces of Picard number one of degree $d\leq8$. We summarise the results in the following table.

\vspace{10pt}\begin{center}
\renewcommand{\arraystretch}{2}
\begin{tabular}{c|c|c}

$d$ & $\alpha_{e,d}$ & $\alpha_{n,d}$ \\
\hline
2 & $\le 2$  & 3 \\
\hline
4 & $\frac{4}{3}$ & $\sqrt{2}<\alpha_{n,4}<\frac{3}{2}$ \\
\hline
6 & 1 & $\frac{6}{5}<\alpha_{n,6}<\frac{4}{3}$ \\
\hline
8 & 1 & 1
\end{tabular}
\vspace{10pt}\captionof{table}{$\eff(\PP(\Omega^1_S))=\langle H, L+\alpha_{e,d}H\rangle, \nef(\PP(\Omega^1_S))=\langle H,
L+\alpha_{n,d}H\rangle$}
\renewcommand{\arraystretch}{1}
\end{center}

In addition, following Section \ref{sectiondestabcurves}, we give examples of positive dimensional families of smooth curves in $S$ so that $\Omega^1_S|_C$ is not
semistable on the general member.

\subsection{$d=2$, Double covers of $\PP^2$}\label{sectiondeg2}
Let $S$ be a generic degree two K3 surface, and let $f:S\to \PP^2$ be the double cover with ramification divisor $R$ and
branch locus a plane sextic curve $C$. There is an exact sequence
\begin{equation}\label{deg2ramificationseqn} 
0\to f^*\Omega^1_{\PP^2}\to \Omega^1_S\to \Omega^1_f\to 0.
\end{equation}
Here $f$ restricts to an isomorphism on $R$, which from the standard cotangent sequence corresponding to the composition
$R\subset S\to\PP^2$ implies that $\Omega^1_f|_R = \O_S(-R)|_R=\O_R(-3)$. In particular after
restricting to $R$ we obtain a negative quotient $\Omega^1_S|_R\to \O_R(-3)\to 0$. This gives not only a smooth curve in $S$ which destabilises the cotangent bundle, but also a curve $C\subset
E=\PP(\Omega^1_S)$ which is extremal in the cone of curves. Indeed $C$ is exactly the intersection of the
$\PP^2\subset S^{[2]}$ (given by the preimages of $S\to\PP^2$) with $E$, and so it can be contracted to a point.

From Theorem \ref{bm} we have that $\nef(S^{[2]})=\langle\tH, \tH-\frac{2}{3} B\rangle$,
$\mov(S^{[2]})=\eff(S^{[2]})=\langle \tH, \tH-B\rangle$ and that the second birational model of $S^{[2]}$ in this case is a rational Lagrangian fibration, i.e.,\ a Lagrangian fibration after a single flop of the aforementioned $\PP^2$ (see \cite[Example 9(i)]{bakker} for further description of the geometry). From Theorem \ref{theonefcone} one computes that the divisor $(\tH-\frac{2}{3} B)|_E=\frac{2}{3} L+2H$ is extremal in the nef cone of $E$ and so $\alpha_{n,2}=3$. Similarly, the restriction $(\tH-B)|_E=L+2H$ is effective on $E$ (and in fact it is movable).

We come now to movable curves destabilising the cotangent bundle. We thank Roberto Vacca for spotting a mistake in a previous construction in the following.
    
\begin{proposition}\label{propdeg2destab}
    Let $S$ be a generic K3 surface of degree two. Then there is a family of smooth curves in $|\O_S(6)|$ destabilising the cotangent bundle. They are the images of curves of class $(L+2H)(L+4H)$ under the projection $\PP(\Omega^1_S)\to S$.
\end{proposition}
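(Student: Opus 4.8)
The key numbers are
$$L\cdot (L+3H)^2 = -6,\qquad H\cdot (L+3H)^2 = 12,\qquad (L+3H)^3 = 30,$$
so a smooth irreducible curve $C'\subset E=\PP(\Omega^1_S)$ of class $(L+3H)^2$ has negative $L$-degree (the source of destabilisation) and maps to $S$ with $\O_S(1)$-degree $12$. The plan is to produce a $12$-dimensional family of such $C'$, to check each maps isomorphically onto a smooth curve $C\in|\O_S(6)|$, and to read off non-semistability of $\Omega^1_S|_C$ from $L\cdot C'<0$. Granting a $C'$ with $\pi|_{C'}$ of degree one, the image $C=\pi(C')$ is smooth with $\O_S(1)\cdot C = H\cdot C' = 12$, hence $C\in|\O_S(6)|$ since $\Pic(S)=\ZZ$; and as $C'\subset\pi^{-1}(C)=\PP(\Omega^1_S|_C)$ carries the tautological class $L$ with $L\cdot C'=-6<0$, the bundle $L$ is not nef on $\pi^{-1}(C)$, i.e. $\Omega^1_S|_C$ is not nef, whence by Lemma \ref{lazlemma} not semistable.

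To construct the curves I would use the geometry of the extremal ray. By the computation $\alpha_{n,2}=3$, the class $L+3H$ is the extremal ray of $\nef(E)$ (the restriction of $\tH-\frac23 B$ up to scale) and is big. Moreover $\tH-\frac23 B$ is semiample on $S^{[2]}$: it defines the flopping contraction of the plane $P=\PP^2$ discussed in Section \ref{BMsection}, whose restriction to $E$ is a birational morphism $\phi\colon E\to E'$ contracting the curve $C_0:=P\cap E$ — the section of $\pi^{-1}(R)\to R$ given by the quotient $\Omega^1_S|_R\to\O_R(-3)$ — to a point, with $\phi^*A = L+3H$ for an ample $A$ on $E'$. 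I would then realise $C'$ as a smooth irreducible representative of $(L+3H)^2$ disjoint from $C_0$, equivalently as the isomorphic lift of a curve of class $A^2$ on $E'$ that misses the point $\phi(C_0)$; such a $C'$ is disjoint from $C_0$, so $[C']=(L+3H)^2$ exactly. A general-position argument (each divisor of $|L+3H|$ meets a $\pi$-fibre $\PP^1$ in a single point) would then give that $\pi|_{C'}$ is an embedding of degree one onto a smooth $C$.

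For the dimension, since $\phi_*\O_E=\O_{E'}$ we have $h^0(E',A)=h^0(E,L+3H)=h^0(S,\Omega^1_S(3))$, which a cohomology computation — via adjunction from the Euler sequence of the weighted projective space $\PP(1,1,1,3)\supset S$, or via the ramification sequence \eqref{deg2ramificationseqn} — evaluates to $8$. Cutting the curves out by pencils gives a family of dimension $2(8-2)=12$; since the lift $C'$ is recovered from $C$ as the unique negative quotient $\Omega^1_S|_C\to L|_{C'}$, the assignment $C'\mapsto C$ is generically injective, so the induced family in $|\O_S(6)|$ is again $12$-dimensional.

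The main obstacle is precisely that $L+3H$ lies on the boundary of $\nef(E)$ and is \emph{not} ample. Indeed $h^0(E,L+3H)=8=h^0(f^*\Omega^1_{\PP^2}(3))$ forces every section of $L+3H$ to vanish along $C_0$, so $C_0$ is a base curve of $|L+3H|$ and a naive complete intersection of two members is the \emph{reducible} curve $C_0\cup C''$, whose residual part $C''$ satisfies $H\cdot C''=6$ (so lands in $|\O_S(3)|$) and $L\cdot C''=12>0$, hence is not destabilising. The real work is therefore to represent the class $(L+3H)^2$ by a $12$-dimensional family of smooth \emph{irreducible} curves disjoint from $C_0$ — which is where the contracted model $E'$ (or, failing a clean complete-intersection description, a direct deformation and irreducibility argument) must be used — and to confirm that for such curves $\pi|_{C'}$ has degree one, so that the images lie in $|\O_S(6)|$ rather than in $|\O_S(3)|$.
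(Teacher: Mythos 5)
Your central objection is correct, and it does not merely block your own route: it identifies a genuine error in the paper's proof. The paper argues exactly along the line you refute. It asserts that the sequence \eqref{deg2ramificationseqn} shows $\Omega^1_S(3)$ is globally generated, hence that $D=L+3H$ is base-point free; Bertini then produces a smooth $S'\in|D|$ and a smooth movable curve $C\in |D|_{S'}|$, the numbers $L\cdot D^2=-6$ and $D^2\cdot H=12$ give the negative quotient and membership of the image in $|\O_S(6)|$ (same numerics as yours, and the same appeal to Lemma \ref{lazlemma}), the computation $K_{S'}^2=-42$, $K_{S'}\cdot D=42$ gives smoothness of the image, and $h^0(S,\Omega^1_S(3))\ge h^0(S,f^*\Omega^1_{\PP^2}(3))=8$ gives the bound $\dim\operatorname{Gr}(2,8)=12$. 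But your value $h^0(S,\Omega^1_S(3))=8$ is right, and it is an equality with the pulled-back sections: by the standard eigensheaf decomposition for the double cover, $f_*\Omega^1_S\cong \Omega^1_{\PP^2}\oplus\bigl(\Omega^1_{\PP^2}(\log B)\otimes\O_{\PP^2}(-3)\bigr)$, so $h^0(S,\Omega^1_S(3))=h^0(\Omega^1_{\PP^2}(3))+h^0(\Omega^1_{\PP^2}(\log B))=8+0$, since a global log form along the irreducible sextic $B$ would have constant residue $c$ with $c\cdot c_1(\O_{\PP^2}(6))=0$ in $\HH^1(\Omega^1_{\PP^2})$, forcing $c=0$. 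Hence every section of $\Omega^1_S(3)$ is of the form $df(f^*\eta)$; at a point $x\in R$ the fibre map $df_x$ has rank one with image equal to the kernel of $\Omega^1_S|_x\to \Omega^1_f|_x$, so every divisor in $|L+3H|$ contains the section $C_0$ over $R$, global generation fails along $R$, and with it the paper's Bertini step (smoothness and irreducibility of $S'$ and of $C$) collapses. Your accounting of the degenerate intersection, $C_0\cup C''$ with $H\cdot C''=6$ and $L\cdot C''=12>0$, is also numerically correct (using $L\cdot C_0=-18$, $H\cdot C_0=6$), so the construction as written produces nothing destabilising beyond the already-known ramification curve $R$.

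That said, your own positive construction is not a proof, as you yourself flag, and the $E'$ route is more obstructed than your wording suggests. A priori only a sufficiently divisible multiple $m(L+3H)$ descends along the contraction $\phi:E\to E'$ of $C_0$; and even granting a Cartier ample $A$ on $E'$ with $\phi^*A=L+3H$, your own computation $h^0(A)=h^0(E,L+3H)=8$ shows every member of $|A|$ passes through the point $\phi(C_0)$ (otherwise its pullback would be a member of $|L+3H|$ missing $C_0$), so no complete intersection of members of $|A|$ can avoid that point: the smooth irreducible representatives of $(L+3H)^2$ disjoint from $C_0$, if they exist, must come from a genuinely different deformation or smoothing argument, and that is precisely the step left open. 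Two smaller points: degree one of $\pi|_{C'}$ does not by itself give an embedding onto a smooth curve, since the differential can drop rank where $C'$ is tangent to a $\pi$-fibre (the paper controls the image instead via the $K_{S'}$ computation, which presupposes the smooth $S'$ you have shown to be unavailable); and the equality $[C']=(L+3H)^2$ for a curve merely disjoint from $C_0$ needs the multiplicity of $C_0$ in the degenerate intersections, which you assume but do not verify. In sum: your critique is correct and shows that the paper's proof of this proposition is flawed as written, while your sketch honestly isolates, but does not close, the remaining gap, so neither argument currently establishes the 12-dimensional family of smooth curves.
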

\begin{proof}
To avoid repeating the same argument we refer to the case of the quartic below in Proposition \ref{propdeg4destab} for what follows. 

From Sequence \eqref{deg2ramificationseqn}, since $\Omega^1_f$ is supported on $R$ and $\Omega_{\PP^2}(2)$ is globally generated, the divisor $D=L+2H$ on $\PP(\Omega^1_S)$ has base locus supported on the curve $R_S=\PP(\Omega_f^1)\subset\PP(\Omega^1_S)$ (isomorphic to $R$) and three linearly independent sections. 

The lift $t:R\to\PP(\Omega^1_S(2H))$ induced by the quotient $\Omega^1_S(2H)\to\Omega^1_f(2H)\cong\O_R(2H-R)\to0$ satisfies $t^*\O(1)\cong\O_R(2H-R)\cong\O_R(-H)$ since $\O_R(-R)=\O_R(-3H)$. Since this divisor is indivisible in $\Pic(R)$, we conclude that the base locus of $L+2H$ is in fact reduced and hence isomorphic to the smooth curve $R$. By the Strong Bertini Theorem \cite[Proposition 5.6]{3264}, we conclude that general sections of $L+2H$are smooth.

A general $S'\in|D|$ is thus smooth and birational to $S$, and by the adjunction formula
$K_{S'}\cong2H-L$, we get that $K_{S'}^2=-32$, so in particular $S'$ is the blowup of $S$ in 32
points - as $S'\subset\PP(\Omega^1_S)$ it can only have irreducible exceptional divisors.
Furthermore, $L+4H$ is ample and base point free (again by sequence \ref{propdeg4destab}). This
implies that there are smooth movable curves $C\in |(L+4H)|_{S'}|$. As $K_{S'}\cdot (L+4H)=32$, the
image of $C$ in $S$ is smooth and $C$ maps isomorphically onto its image.

Now $\deg L|_C=L\cdot (L+2H)\cdot (L+4H)=-8$, which means the quotient line bundle
$$\pi^*\Omega^1_S|_C \to L|_C\to 0$$
is negative on $C$.  Hence $\Omega^1_S$ has a negative quotient when restricted to the image of $C$ in $S$ and so it is not nef. Note that $(L+2H)(L+4H)\cdot H=12$, so that the images of these curves move in a positive dimensional family in $|\O_S(6)|$ on $S$. As $h^0(L+2H)=3$ and $h^0(L+4H)=12$, we see that these curves form a 15-dimensional subvariety in $|O_S(6)|$.

A similar argument shows that general sections of $L+3H$ are smooth, so that one can also produce large families of smooth curves coming from $(L+3H)(L+4H)$ whose image in $S$ is smooth and the restriction of $\Omega^1_S$ is strictly semistable since $L(L+3H)(L+4H)=0$.
\end{proof}

\subsection{$d=4$, Quartics in $\PP^3$}\label{sectiondeg4} Let $S\subset\PP^3$ be a quartic surface of Picard number one. As studied by Beauville in \cite{beauville}, $S^{[2]}$ has an
involution $i:S^{[2]}\to S^{[2]}$ which maps the reduced length two subscheme $p+q\in S^{[2]}$ to the residual degree
two subscheme on the intersection $S\cap\ell_{p,q}$ of $S$ with the line through $p$ and $q$. When $S$ has Picard number one, this involution must swap the boundaries of the pseudoeffective and nef cones. 

There is a natural morphism $f:S^{[2]}\to {\rm Gr}(2,4)$ taking a $0$-cycle of degree two and associating to it the line in $\PP^3$ spanned by the two
points. Since a general line meets $S$ in four points, this is generically finite of degree $\binom{4}{2}=6$. In terms of $\tH,B$ the Pl\"ucker polarisation pulls back to $\tH-B$. In particular
$\tH-B$ is fixed by the involution $i$. From this, one computes that $i(\tH)+\tH=4(\tH-B)$, and hence
$i(\tH)=3\tH-4B$ and $i(B)=2\tH-3B$. These two divisors are clearly extremal in $\eff(S^{[2]})$ and $\nef(S^{[2]})$ respectively, so we get $\nef(S^{[2]})=\mov(S^{[2]})=\langle\tH, 3\tH-4
B\rangle$ and $\eff(S^{[2]})=\langle B, 2\tH-3 B\rangle$.

\subsubsection{The pseudoeffective cone of $\PP(\Omega^1_S)$} This has been computed in Theorem \ref{effconecor}. Since the
second boundary of the pseudoeffective cone of $S^{[2]}$ restricted to $E$ is given by $(\tH-\frac{3}{2}B)|_E=\frac{3}{2}L+2H$ we
obtain the following.

\begin{corollary}\label{quarticcor}
    Let $S\subset\PP^3$ be a smooth quartic surface of Picard number one. Then for $L=\O_{\PP(\Omega^1_S)}(1)$ and $H$
    the pullback of $\O_S(1)$ to $\PP(\Omega^1_S)$ we have $$\eff(\PP(\Omega^1_S)) = \left\langle 3L+4H, H\right\rangle.$$
\end{corollary}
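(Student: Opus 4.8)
The plan is to apply Theorem \ref{effconecor} directly in the case $t=2$ and then read off the second extremal divisor from the explicit description of $\eff(S^{[2]})$ obtained above via Beauville's involution. First I would verify the three numerical hypotheses. Since $d=4$ we have $t=2$, which is not a perfect square. Next, the equation $x^2-8y^2=5$ has no integer solutions: reducing modulo $8$, the left-hand side takes values in $\{0,1,4\}$, whereas $5$ is not among these residues. Finally, the Pell equation $x^2-2y^2=1$ has minimal solution $(a,b)=(3,2)$, for which $b=2$ is even. Hence all three conditions of Theorem \ref{effconecor} are met, and we conclude $\eff(E)=\langle H, D|_E\rangle$, where $D$ is the second extremal divisor of $\eff(S^{[2]})$.

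It then remains to identify $D$ and restrict it to $E$. From the discussion of this subsection, $\eff(S^{[2]})=\langle B, 2\tH-3B\rangle$, so the second extremal ray is spanned by $2\tH-3B$. Using the restriction formulas $\tH|_E=2H$ and $B|_E=-L$ from Section \ref{prelim}, I compute
\[
(2\tH-3B)|_E = 2(2H)-3(-L) = 4H+3L.
\]
Therefore the second extremal ray of $\eff(E)$ is spanned by $3L+4H$, which together with the ray $\langle H\rangle$ yields $\eff(\PP(\Omega^1_S))=\langle 3L+4H, H\rangle$, as claimed.

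The only genuine content beyond this bookkeeping is already packaged into Theorem \ref{effconecor}, via Proposition \ref{theoinvolution}: the extremality of $D|_E$ ultimately reduces to the non-pseudoeffectivity of $\Omega^1_T$, which bounds the growth of $h^0(E,mD|_E)$. Thus the main point to confirm is simply that the quartic falls within the scope of that result, i.e.\ that the second extremal divisor of $S^{[2]}$ is isomorphic to $\PP(\Omega^1_T)$ for some K3 surface $T$. For the quartic this is transparent, since the second extremal divisor is the image $i(E)$ of $E$ under Beauville's involution $i$, and as $i$ is an isomorphism we have $i(E)\simeq\PP(\Omega^1_S)$, so here $T=S$. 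The remaining steps are the elementary number-theoretic check and the linear restriction computation above.
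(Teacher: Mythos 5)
Your proposal is correct and follows essentially the same route as the paper: the paper likewise deduces the corollary by invoking Theorem \ref{effconecor} (whose hypotheses hold for $t=2$, with $D\simeq\PP(\Omega^1_S)$ realised as $i(E)$ for Beauville's involution, exactly as you note) and then restricting the second extremal ray $\tH-\frac32 B$ of $\eff(S^{[2]})$ to $E$, giving the same class $\frac32L+2H\propto 3L+4H$. Your explicit verification of the Pell-type conditions and the mod-$8$ argument are correct details that the paper leaves implicit.
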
\noindent The divisor $i(E)|_E=6L+8H$ is effective on $E$. Indeed, this is exactly the surface of bitangents $U_0\subset \PP(\Omega^1_S)$ (see \cite[Proposition 2.3]{tihomirov} and
\cite[Proposition 3.14]{welters}). On the other hand, the generator $3L+4H$ is not itself effective: taking symmetric powers of the standard cotangent sequence associated to a quartic in $\PP^3$ one obtains
$$0\to\Sym^{k-1}\Omega^1_{\PP^3}|_S\otimes \O_S(-4)\to \Sym^k\Omega^1_{\PP^3}|_S\to \Sym^k\Omega^1_S\to 0.$$
We can compute the cohomology of the middle term using the sequences
\begin{eqnarray*}
0\to\Sym^k\Omega^1_{\PP^3}(m-4)\to\Sym^k\Omega^1_{\PP^3}(m)\to\Sym^k\Omega^1_{\PP^3}|_S(m)\to 0 \\
0\to\Sym^k\Omega^1_{\PP^3}\to \O_{\PP^3}(-k)^{\oplus \alpha_k}\to \O_{\PP^3}(-k+1)^{\oplus\beta_k}\to 0\label{symeuler}
\end{eqnarray*}
where $\alpha_k=\binom{k+n}{k}$ and $\beta_k=\binom{k+n-1}{k-1}$. Hence $\HH^0(\PP(\Omega^1_S),3L+4H)=0$, since it is sandwiched between $\HH^0(S, \Sym^3\Omega^1_{\PP^3}|_S(4))$ and $\HH^1(S, \Sym^2\Omega^1_{\PP^3}|_S)$, the
first of which is zero (e.g., by Proposition \ref{coneprojspace}), whereas the second is zero by Kodaira vanishing.

\subsubsection{The Nef cone of $\PP(\Omega^1_S)$} 
Going on now to explain Remark \ref{quarticremark}, consider again the surface $U_0=i(E)\cap E$ of bitangents of $S$.
The Hilbert--Chow morphism restricts to a finite morphism $U_0\to S$ of degree six (see \cite{welters}; counted with
multiplicities, there are six bitangent lines through any point in $S$). In other words, the morphism $i(E) \to
\operatorname{Sym}^2(S)$, induced by $i(H)$ is finite (since any curve contracted would have to lie in $i(E)\cap E$). This implies that the divisor $i(H)|_E=4L+6H$ is ample on $E$, and hence $\alpha_{n,4}<\frac{3}{2}$.

Now, from the computation of the
pseudoeffective cone above one sees that $L+\sqrt{2}H$ is pseudoeffective, but $(L+\sqrt{2}H)^2(3L+4H)<0$ which means that $L+\sqrt{2}H$ is not nef. Hence $\sqrt{2}<\alpha_{n,4}<\frac{3}{2}$.

\subsubsection{Families of Destabilising Curves} We will give two families of destabilising curves on $S$,
the second of which has a natural geometric description as the family of ramification divisors from a generic projection $S \to \PP^2$.

\begin{proposition}\label{propdeg4destab}
    Let $S\subset\PP^3$ be a generic quartic K3 surface. 
    \begin{enumerate}[$(1)$]
        \item There is an $8$-dimensional family of smooth curves in $|\O_S(4)|$ destabilising the cotangent bundle. These
        are the images of the smooth curves of class $(L+2H)^2$ in $\PP(\Omega^1_S)$, under the projection to $S$.
        \item There is a 3-dimensional family of smooth curves in $|\O_S(3)|$ destabilising the cotangent bundle.
        Geometrically this is the family of ramification divisors $R_p$ of the projection morphism $f_p:S\to\PP^2$ from a
        generic point $p$ and the destabilising quotient is given by $\Omega_{f_p}|_{R_p}$.
    \end{enumerate}
\end{proposition}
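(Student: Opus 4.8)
The plan is to run, for both families, the mechanism already used in the degree two case (Section \ref{sectiondeg2} and Proposition \ref{propdeg2destab}): I will exhibit an effective $1$-cycle class $C$ on $E = \PP(\Omega^1_S)$ along which the universal quotient $\pi^*\Omega^1_S \to L \to 0$ restricts to a \emph{negative} line bundle. Pushing this quotient down produces a negative quotient of $\Omega^1_S$ on the image $\pi(C) \subset S$, which by Lemma \ref{lazlemma} forces $\Omega^1_S|_{\pi(C)}$ to be non-semistable, provided $\pi(C)$ is smooth. The two families then differ only in the choice of $C$ and in how one controls the descent to $S$.

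For (1), I would start from Proposition \ref{coneprojspace}: since a generic quartic contains no line, $L+2H = f^*\O_{\operatorname{Gr}(2,4)}(1)$ is ample, in particular base-point free, so Bertini yields a smooth surface $S' \in |L+2H|$ and a smooth curve $C$ of class $(L+2H)^2$ on it. The numerical heart of the argument is
$$L\cdot(L+2H)^2 = L^3 + 4L^2H + 4LH^2 = -24 + 0 + 16 = -8 < 0,$$
which makes $L|_C$ negative and hence $\pi^*\Omega^1_S|_C$ non-nef. To descend to $S$ I would analyse $\pi|_{S'}\colon S'\to S$. Since $K_{\PP(\Omega^1_S)} = -2L$, adjunction gives $K_{S'} = (-L+2H)|_{S'}$, and one computes $K_{S'}^2 = -40$ and $K_{S'}\cdot(L+2H) = 40$. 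Thus $\pi|_{S'}$ is the blow-up of $S$ at $40$ points and the general $C$ meets each exceptional curve in exactly one point, so its image is smooth. Finally $H\cdot(L+2H)^2 = 16 = 4\cdot\O_S(1)^2$ shows $\pi(C) \in |\O_S(4)|$, while $h^0(E, L+2H) = h^0(S, \Omega^1_S(2)) = 6$ (computed from the Euler and conormal sequences of the quartic) means the curves $C$ vary in a $\operatorname{Gr}(2,6)$, giving an $8$-dimensional family of images.

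For (2), I would realise $R_p$ through the Grassmannian map $f\colon E\to\operatorname{Gr}(2,4)$ of \eqref{Grmap}. Projection from a general point $p\in\PP^3$ is precisely the $t=2$ instance of Proposition \ref{propramdiv}, with centre $\Lambda$ a point; it makes $f_p\colon S\to\PP^2$ a degree four cover whose ramification divisor $R_p$ lies in $|\O_S(3)|$ by Hurwitz and is smooth irreducible for general $p$ by \cite{cilibertoflamini}. The locus in $E$ of tangent directions whose tangent line passes through $p$ is the curve $f^*\sigma_2$, which maps isomorphically onto $R_p$, and Proposition \ref{propramdiv} gives $L\cdot f^*\sigma_2 = 6t-24 = -12 < 0$; hence $\Omega^1_S|_{R_p}$ carries a negative quotient and is not semistable. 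I would then identify this quotient with $\Omega^1_{f_p}|_{R_p}$ via the relative cotangent sequence $0\to f_p^*\Omega^1_{\PP^2}\to\Omega^1_S\to\Omega^1_{f_p}\to 0$: at a general ramification point the codifferential degenerates exactly along the contracted tangent direction, so the torsion sheaf $\Omega^1_{f_p}$ is the evaluation quotient cut out by $L$. As $p$ ranges over $\PP^3$ the divisors $R_p$ sweep out a $3$-dimensional family.

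The main obstacle in both parts is the passage from the curve $C\subset E$ to an honestly \emph{smooth} curve on $S$, together with the dimension count. For (1) this is the verification that the general member of $|(L+2H)|_{S'}|$ meets each of the $40$ exceptional $(-1)$-curves of $\pi|_{S'}$ transversally in a single point, so that no nodes are introduced, and that distinct pencils give distinct images, so the family is genuinely of dimension $\dim\operatorname{Gr}(2,6) = 8$. For (2) the delicate points are the smoothness and irreducibility of $R_p$ for general $p$ and the scheme-theoretic identification of $\Omega^1_{f_p}|_{R_p}$ with the negative quotient determined by $L$; both hinge on the ramification of $f_p$ being simple for the general projection centre.
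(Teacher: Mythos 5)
Your proposal is correct and follows essentially the same route as the paper: part (1) reproduces the paper's argument verbatim (Bertini on the base-point-free class $L+2H$, the intersection numbers $L\cdot(L+2H)^2=-8$, $K_{S'}^2=-40$, $K_{S'}\cdot(L+2H)=40$, $H\cdot(L+2H)^2=16$, and the $\dim\operatorname{Gr}(2,6)=8$ count that the paper leaves implicit in ``intersecting two generic divisors in $|D|$''), and part (2) likewise rests on Proposition \ref{propramdiv} at $t=2$. The only divergence is cosmetic: where you identify the destabilising quotient with $\Omega^1_{f_p}|_{R_p}$ by a local analysis of the codifferential along the ramification curve, the paper verifies the same identification numerically, computing $\deg\Omega^1_{f_p}|_{R_p}=-12$ from the branch-curve geometry (an irreducible degree $12$ plane curve with $12$ nodes and $24$ cusps, fed into the sequence $0\to\O_{R_p}(-3)\to\Omega^1_{f_p}|_{R_p}\to\Omega^1_{R_p/B}\to 0$).
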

\begin{proof}
Observe first that the line bundle $D=L+2H$ is base-point free on $\PP(\Omega^1_S)$ by Proposition \ref{coneprojspace}. By Bertini, there is a smooth surface  $S'\in |D|$. Recall that on $\PP(\Omega^1_S)$, we have $L^3=-24$, $L^2\cdot H=0$, $L\cdot H^2=4$. Let $\ell=L|_{S'}$ and $h=H|_{S'}=f^*\O_S(1)$. This implies that we have the following intersection numbers for $\ell$ and $h$ on $S'$: $\ell^2=-24$, $\ell\cdot h=8$ and $h^2=4$. It follows that $\ell\cdot(\ell+2h)=-8$. Note that $D|_{S'}=\ell+2h$ is nef, so the restriction of $L$ is not pseudoeffective on $S'$. 

Denote by $f:S'\to S$ the restriction of $\pi$ to $S$; $S'$ is generically a section of $\pi$, so $f$ is birational. In fact, computing the canonical bundle of $S'$ gives $K_{S'}=(K_{\PP(\Omega^1_S)}+D)|_{S'}=2H-L $ which gives that $K_{S'}^2=-40$. In particular $f:S'\to S$ must be the blow-up of $S$ in 40 distinct points since any exceptional curve is contracted to $S$ so must be a fibre of $\PP(\Omega^1_S)\to S$. Moreover $D \cdot K_{S'}=(L+2H)\cdot(-L+2H)=40$ so the general divisor in $C\in |D|_{S'}|$ passes through each exceptional divisor with multiplicity one, implying that its image $C'=f(C)$ in $S$ is smooth. We have $16=D^2\cdot H=\pi_*D^2\cdot\O_S(1)$ and so it follows that $C'$ lies in $|\O_S(4)|$. In fact we obtain an 8-dimensional family this way by intersecting two generic divisors in $|D|$.

The second family was already proven to be destabilising in Proposition \ref{propramdiv}. Using \cite[Proposition 2.6]{pillow} we compute that for $p\in \PP^3$ generic, the ramification divisor $R$ is a smooth genus 19 curve and it is the normalisation of its image, the branch divisor $B$. Also, the branch divisor $B\subset\PP^2$ is irreducible of degree 12 and from \cite{cilibertoflamini} has only cusps and nodes as singularities. Using the results \emph{loc. cit.} one computes it has 12 nodes and 24 cusps, and since only the latter contribute to ramification, this implies that $\deg\Omega^1_{R/B}=24$. The conormal bundle $I/I^2$ of $R\subset S$ is $\O_R(-3)$ and from the sequence 
\begin{equation}\label{relcotangentseqn}
0\to\O_R(-3)\to\Omega^1_f|_R\to\Omega^1_{R/B}\to0
\end{equation}
we see that since the left has degree $-36$ and the right $24$, we obtain that $\deg\Omega^1_f|_R=-12$. From the sequence $\Omega^1_S|_R\to\Omega^1_f|_R\to0$ we thus obtain a negative quotient, which is the desired explicit destabilising quotient.
\end{proof}

\subsection{$d=6$, Intersections of cubics and quadrics in $\PP^4$}\label{sectiondeg6}

Here $\nef(S^{[2]})=\mov(S^{[2]})=\langle\tH, \tH-\frac{3}{2} B\rangle$ and $\eff(S^{[2]})=\langle B, \tH-2B\rangle$. In
this case the second birational model is a divisorial contraction like in the $t=2$ case, but $S^{[2]}$ is not ambiguous, so we are not in the case of Theorem \ref{effconecor}.  

We will compute the effective cone, in particular proving that $\alpha_{e,6}=1$, using the beautiful geometry of $S^{[2]}$ related to nodal cubic fourfolds. We will also show that like in the $d=4$ case, the restriction of the nef cone from $S^{[2]}$ is in the interior of $\nef(\PP(\Omega^1_S))$, obtaining at least in this case that $\alpha_{n,6}<\frac{4}{3}$. 

\begin{remark}
One could alternatively use cohomological techniques to compute that the divisor given by the restriction $2L+2H=(\tH-2B)|_E$ is extremal in the pseudoeffective cone. We do not reproduce the proof here as it is a lengthy computation, but it follows from the fact that $S$ is the intersection $K\cap Q\subset\PP^4$ of a smooth cubic and quadric, so one can take symmetric products of the Euler sequence restricted to $K$ and the cotangent sequence $0\to\O_S(-1)\to\Omega^1_K|_S(1)\to\Omega^1_S(1)\to0$ to show that $h^0(S, \Sym^m(\Omega^1_S(1)))$
does not have cubic growth in $m$.
\end{remark}

We begin by describing a birational model of $S^{[2]}$, as found for example in \cite[Lemma 6.3.1]{hassett:2000}, which is induced by $M=2\tH-3B$. 
Let $$Z=\{x_5q(x_0,\ldots,x_4)+c(x_0,\ldots, x_4)=0\}\subset\PP^5$$ be a general cubic fourfold with a node at $p=[0,0,0,0,0,1]$. Denote by $Q=\{q=0\}, K=\{c=0\}$ the quadric and cubic in $\PP^4$ respectively. Projecting from $p\in\PP^5$ onto this $\PP^4$ we obtain a rational map $Z\dashrightarrow \PP^4$. In the Fano scheme of lines $F$ of $Z$, the locus of lines through $p$ is parameterised by the complete intersection $S=\{q=c=0\}$ in this $\PP^4$ which is a smooth K3 surface of degree six and Picard number one. Consider now the map 
$$f:S^{[2]}\to F$$ taking two lines $\ell_1,\ell_2$ through $p$ and giving the residual line of intersection of $Z$ with the plane spanned by $\ell_1,\ell_2$. Note that this is indeed a morphism (it is not defined precisely when $Z$ contains a 2-plane through $p$, which we may assume does not happen by genericity). Starting with a line $\ell\in F\setminus S$ and considering the plane spanned by $\ell$ and $p$, we obtain a residual singular conic in $Z$ whose constituent two lines give a unique point of $S^{[2]}$. This implies that the map $f$ is birational. Moreover one can prove that $F$ has $A_1$ singularities along $S$ and is smooth otherwise. Finally $S^{[2]}$ is isomorphic to the blowup $\operatorname{Bl}_SF$ of $F$ at $S$ (see \cite[Lemma 6.3.1]{hassett:2000}), and $M$ is the pullback of the Pl\"ucker polarisation. Let $D$ denote the exceptional divisor of $f$; the map $f$ restricts to a smooth $\PP^1$-fibration $D\to S$. Geometrically, this means that if one fixes a line $\ell$ through $p$, then there is a family over $\PP^1$ of 2-planes through $\ell$ whose residual conic is two lines $\ell_1+\ell_2\in S^{[2]}$ passing through $p$. 

Consider now $\Gr(2,4)$ the Grassmannian of lines in $\PP^3$, embedded as a quadric in $\PP^5$ and embed $Q$ from above as a hyperplane section of $\Gr(2,4)$.

\begin{lemma}
    The exceptional divisor $D$ is isomorphic to $\PP(U^\vee|_S)$ where $U$ is the universal bundle on $\Gr(2,4)$. 
\end{lemma}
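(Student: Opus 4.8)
The plan is to identify the $\PP^1$-fibration $\pi\colon D\to S$ (the restriction $f|_D$) with the universal line over $S\subset Q\subset G(2,4)$, and then to deduce the non-pseudoeffectivity of $-D|_D$ from the fact that the second projection to $\PP^3$ is finite. First I would describe the fibres of $\pi$. Fix a node-line $\ell_a$, i.e.\ a point $a\in S$. A point of $D$ lying over $a$ is a length-two subscheme $\{\ell_1,\ell_2\}$ whose residual line under $f$ is $\ell_a$; since $\ell_1,\ell_2$ and $\ell_a$ all pass through $p$, this forces the plane $\Pi=\langle\ell_1,\ell_2\rangle$ to meet $Z$ in three concurrent lines, i.e.\ $Z\cap\Pi$ has a triple point at $p$. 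As the tangent cone of the node is exactly $Q\subset\PP^4$, this occurs precisely when the line $m\subset\PP^4$ of directions of $\Pi$ lies on $Q$, and $\ell_a\subset\Pi$ forces $a\in m$; conversely any line $m\subset Q$ through $a$ produces such a plane, with $\{\ell_1,\ell_2\}$ the residual pair of $m\cap K=m\cap S$. Hence the fibre of $\pi$ over $a$ is naturally the $\PP^1$ of lines of $Q$ through $a$.

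Next I would apply the standard description of lines on the quadric threefold $Q$. Writing $Q\cong LG(2,4)$ for the symplectic form $\omega$ on $W=\CC^4$ determined by the hyperplane cutting $Q$ out of $G(2,4)$ (so $\PP^3=\PP(W)$), a line of $Q$ through $a=[\Lambda_a]$ is a pencil of Lagrangians $\{\,L_0\subset\Lambda\subset L_0^\perp\,\}$, and is therefore recorded by the isotropic line $L_0\subset\Lambda_a$. Since every line $L_0\subset\Lambda_a$ is automatically isotropic, the lines of $Q$ through $a$ are exactly the points of $\PP(\Lambda_a)=\PP(U^\vee|_S)_a$. This correspondence is the restriction to $S$ of the incidence $\PP(U^\vee|_Q)\to Q$ between $Q$ and its variety of lines, so it is relative over $S$ and globalises to an isomorphism $D\cong\PP(U^\vee|_S)$ over $S$. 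Under it the tautological class $\zeta:=\O_{\PP(U^\vee|_S)}(1)$ is the pullback $g^*\O_{\PP^3}(1)$ along the second projection $g\colon D\to\PP^3=\PP(W)$, $(a,[L_0])\mapsto[L_0]$.

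For the final assertion I would first show $g$ is finite. A general point $x=[L_0]\in\PP^3$ lies on $\ell_a$ iff $\Lambda_a$ belongs to the pencil $\{L_0\subset\Lambda\subset L_0^\perp\}$, which is a line $m_x\subset Q$; the relevant $a\in S$ are thus $m_x\cap S=m_x\cap K$, three points since $m_x\subset Q$ is a line and $K$ a cubic. So $g$ has degree $3$, and it is finite because $S$ has Picard number one and degree $6$ and hence contains no line (so no line $m\subset Q$ lies on $S$). Therefore $\zeta=g^*\O_{\PP^3}(1)$ is ample, with $\zeta^3=\deg g=3$. On the other hand $\omega_{S^{[2]}}=\O_{S^{[2]}}$, so adjunction gives $\O_D(D)=\omega_D$; using $c_1(U^\vee|_S)=\O_G(1)|_S=\O_S(1)=:h$ this yields
$$-D|_D=-K_D=2\zeta-\pi^*c_1(U^\vee|_S)=2\zeta-h .$$

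Finally I would compute $(2\zeta-h)\cdot\zeta^2=2\zeta^3-\zeta^2\cdot\pi^*\O_S(1)=2\cdot 3-\O_S(1)^2=6-6=0$. Since $\zeta$ is ample, $\zeta^2$ lies in the interior of the movable cone of curves, the dual of $\overline{\operatorname{Eff}}(D)$, so the functional $\,\cdot\,\zeta^2$ is strictly positive on $\overline{\operatorname{Eff}}(D)\setminus\{0\}$; as $2\zeta-h\neq 0$ this forces $2\zeta-h$ to be non-pseudoeffective. The main work is in the first two paragraphs — promoting the fibrewise bijection to an honest isomorphism over $S$, and in particular matching $\zeta$ with $g^*\O_{\PP^3}(1)$ — after which the non-pseudoeffectivity is the one-line intersection computation above, made possible precisely by the ampleness of $\zeta$ (equivalently, by $S$ containing no lines).
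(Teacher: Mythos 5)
Your proof is correct and takes essentially the same route as the paper: identify $D$ with $\PP(U^\vee|_S)$ via residual intersections of lines of $Q$ with the cubic, deduce ampleness of the tautological class from finiteness of the natural projection to $\PP^3$, and conclude that $-D|_D=-K_D=2\zeta-h$ is not pseudoeffective from the vanishing $(2\zeta-h)\cdot\zeta^2=0$ against the ample class $\zeta$ (the paper phrases this as $l^2(2l-g)=(\sigma_1^2-2\sigma_2)\sigma_1^2=0$, with $g$ the Pl\"ucker class, which is your $h$). If anything you are more careful at one point: the projection $D\to\PP^3$ has degree $3$ (your fibre $m_x\cap K$, three points), not one as the paper's parenthetical count suggests --- ``lines through a point incident to two general lines'' computes $\sigma_2\cdot\sigma_1^2=1$ and overlooks that $[S]=3\sigma_1^2$ because of the cubic --- and your no-lines-on-$S$ argument (Picard number one, degree six) correctly rules out positive-dimensional fibres over \emph{every} point of $\PP^3$, which is what finiteness, and hence ampleness of $\zeta$, actually requires.
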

\begin{proof}
Note that points of $D\subset S^{[2]}$ parameterise pairs of lines through the node $p\in Z$ so that the residual line
in the 2-plane spanned by the two lines also passes through $p$. Projecting from $p$, we see that the images of the
three lines thus lie on a line $\ell\subset Q$, which is a trisecant to $S$.

Let $\mathcal S$ denote the rank 2 spinor bundle on $Q$. We have $\mathcal S=U|_Q$ where $U$ is the universal subbundle
on $\Gr(2,4)$. Also $\mathcal S^\vee\simeq\mathcal S(1)$. The projective bundle $\pi: \PP(\mathcal S^\vee)\to Q$
parameterises lines in $Q$; the fiber over $q\in Q$ is the $\PP^1$ of lines $\ell \subset Q$ through $q$.  We assume
that $S$ contains no lines, so for a point $s\in S\subset Q$, and a line $\ell$ containing $s$, the intersection
$\ell\cap S$ consists of $s$ and a length two subscheme $\xi \in S$. This $\xi$ in turn corresponds to two (possibly
equal) lines $\ell_1,\ell_2$ in $Z$ passing through $p$, and the residual line of these is exactly the line corresponding to
$s$. We have therefore defined a morphism $\PP(\mathcal S^\vee)\to D$. It is clear that this morphism is bijective,
since given $\ell_1,\ell_2$, the line $\ell$ is determined from the secant line through the images of the projections of
$\ell_1,\ell_2$ in $S$.
\end{proof}
\begin{corollary}
    The divisor $L+H$ is extremal in the effective cone, giving $\alpha_{e,6}=1$.
\end{corollary}
\begin{proof}
The line bundle $l=\O_{\PP(U^\vee|_S)}(1)$ is ample since it induces a map $\PP(U^\vee|_S)\to\PP^3$ which is finite.
Indeed, regarding $\PP^3$ as the variety of lines in $Q$, the preimage of a point $[\ell]\in \PP^3$ consists of the
points $\ell \cap S$, which is finite since $S$ contains no lines. Note now that $-D|_D=-K_D$ which equals $2l-g$ for
$g$ the pullback of the hyperplane bundle on $S$. The curve class $l^2$ is ample and one computes
$l^2\cdot(2l-g)=(\sigma_1^2-2\sigma_2)\cdot\sigma_1^2=0$ for $\sigma_i$ the standard Schubert cycles, giving in particular that
$2l-g$ is not pseudoeffective. We will finally sketch how the proof of Proposition \ref{theoinvolution} implies that $D=\widetilde{H}-2B$ is
extremal on $E$, giving that $\alpha_{e,6}=1$. To this aim, we must show that $\HH^1(D,
mD|_D-E|_D)=\HH^1(\PP(U^\vee|_S), mD|_D-E|_D)$ vanishes for $m$ large. This group equals $\HH^0(S,
\Sym^{2m+a}U|_S\otimes(\det U|_S)^{m+b})$, for some $a,b$ integers. It follows from the fact that $2l-g$ is not
pseudoeffective as seen above, along with the isomorphism $U\cong U^\vee\otimes\det U$, that this group is zero for large $m$.
\end{proof}

As far as the nef cone is concerned, we prove the following analogue of the degree four case.

\begin{lemma}
    The restriction of the above morphism $f:D\cap E\to S$ is finite. In particular $D|_E$ is ample.
\end{lemma}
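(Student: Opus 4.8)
The plan is to deduce the ampleness of the restriction of the Pl\"ucker polarisation $M|_E=(2\tH-3B)|_E=3L+4H$ --- the divisor of slope $\frac43$ whose ampleness forces $\alpha_{n,6}<\frac43$ --- from the finiteness of $f|_{D\cap E}$. First I would record the reduction. Since $M$ is the pullback $f^{*}\mathcal A$ of the ample Pl\"ucker polarisation $\mathcal A$ on $F$, its restriction $M|_E=(f|_E)^{*}\mathcal A$ is ample if and only if the morphism $f|_E\colon E\to F$ is finite. Now $f$ is the divisorial contraction of $D$ and an isomorphism away from $D$, so the only curves it contracts are the fibres of the $\PP^1$-fibration $f|_D\colon D\to S$; hence a curve of $E$ is contracted by $f|_E$ exactly when it is such a fibre lying in $D\cap E$. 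Therefore $f|_E$ is finite if and only if $f|_{D\cap E}\colon D\cap E\to S$ is finite, which is the first assertion of the lemma.

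Next I would pass to the line geometry. Under $D\iso\PP(U\v|_S)$ a point of $D$ is a pair $(s,\ell)$ with $s\in S$ and $\ell\sub Q$ a line through $s$, the morphism $f|_D$ is the projection $(s,\ell)\mapsto s$, and the associated point of $S^{[2]}$ is the residual length-two scheme $\ell\cap S-s$. Thus $D\cap E$ is the locus where this residual scheme is non-reduced, i.e.\ where $\ell$ is tangent to $S$ at its residual point; geometrically it is the surface of lines of $Q$ tangent to $S$. Being an effective divisor on the threefold $D$ it is two-dimensional, and since $(D\cap E)\cap\mathfrak{}$ meets each fibre $f|_D^{-1}(s_0)\iso\PP^1$ either in finitely many points or in the whole fibre, $f|_{D\cap E}$ is finite precisely when $D\cap E$ contains no entire fibre of $f|_D$ --- equivalently, when for no point $s_0\in S$ is every line $\ell\sub Q$ through $s_0$ tangent to $S$.

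The main obstacle is exactly this exclusion, that is, upgrading generic finiteness to genuine finiteness. Generic finiteness is immediate, since a general line of $Q$ through $s_0$ meets the cubic $K$, hence $S=Q\cap K$, in three distinct points, so its residual is reduced; equivalently $E|_D$ has positive degree $a=E\cdot(\text{fibre})>0$ on each fibre of $f|_D$, and so meets a general fibre in finitely many points. To rule out a whole fibre lying in $D\cap E$ I would argue as with the finite bitangent count of the degree four case: such a fibre would force every line of $Q$ through a single point $s_0$ to be tangent to $S$, making the tangency points sweep a curve $\Gamma\sub S$ all of whose $Q$-tangent lines pass through $s_0$, a degeneration that does not occur for the general nodal cubic fourfold $Z$ (equivalently, the general degree six K3 of Picard number one). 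This can be made uniform by pushing $\O_D(E|_D)$ forward along $f|_D$ to a section $\tau$ of $\Sym^{a}(U\v|_S)\otimes\O_S(b)$, since $f|_{D\cap E}$ is finite if and only if $\tau$ is nowhere vanishing; one checks $a\ge2$, so the rank $a+1$ exceeds $\dim S=2$ and there is no topological obstruction to nowhere vanishing, which the genericity of $Z$ then secures. Granting this finiteness, $M|_E=3L+4H$ is ample, whence $\alpha_{n,6}<\frac43$.
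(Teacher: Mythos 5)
Your framing is correct and matches the paper's intent: finiteness of $f|_{D\cap E}$ is equivalent to finiteness of $f|_E\colon E\to F$ (the only curves $f$ contracts are fibres of the $\PP^1$-fibration $D\to S$), which is equivalent to ampleness on $E$ of the pullback $(2\tH-3B)|_E=3L+4H$ of the Pl\"ucker polarisation --- and your silent correction of the lemma's ``$D|_E$ is ample'' (note $D|_E=2L+2H$ is the pseudoeffective boundary, so the intended divisor is indeed $M|_E=3L+4H$, as the paper uses in the next sentence) is the right reading. Your translation into line geometry is also exactly the paper's: $D\cap E$ is the locus of pairs $(s,\ell)$ with $\ell\subset Q$ through $s$ whose residual scheme $\ell\cap S-s$ is non-reduced, and failure of finiteness means every line of $Q$ through some fixed $s_0\in S$ is tangent to $S$. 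But at precisely this point --- the heart of the lemma --- you substitute an assertion: that this degeneration ``does not occur for the general nodal cubic fourfold.'' Your proposed uniformisation via the section $\tau\in\HH^0(S,\Sym^a(U^\vee|_S)\otimes\O_S(b))$ cutting out $D\cap E$ does not close this gap: $\tau$ is one specific section determined by $S$, so the observation that the rank $a+1$ exceeds $\dim S$ (i.e.\ no topological obstruction, which would be relevant only for a general section of a globally generated bundle) says nothing about whether this particular $\tau$ vanishes somewhere, and you give no mechanism --- no dominance or dimension count over the family of nodal cubics --- by which genericity of $Z$ would force nonvanishing. As it stands the key step is unproven.

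The paper closes this gap with a short concrete argument valid for every $S$ with $\Pic(S)=\ZZ\O_S(1)$, not just generic ones. The lines of $Q$ through $s_0$ sweep out the two-dimensional quadric cone $Q_0=Q\cap T_{s_0}Q$ with vertex $s_0$, spanning the hyperplane $T_{s_0}Q\simeq\PP^3$. If every such line were tangent to $S$, the tangency points would trace a curve $\Gamma\subset S$ (they cannot be constant, as two points determine a unique line), and the hyperplane section $S\cap T_{s_0}Q=S\cap Q_0\in|\O_S(1)|$ would be non-reduced along $\Gamma$, i.e.\ would contain $2\Gamma$ as a subdivisor. With $\Pic(S)=\ZZ\O_S(1)$ one has $\Gamma\in|\O_S(k)|$ for some $k\ge1$, and $\O_S(1)-2\Gamma$ effective forces $1\ge 2k$, a contradiction. (The paper phrases this upstairs in $\PP^5$: a fibre inside $D\cap E$ gives a pencil of planes through the line $\ell$ over $s_0$ cutting $Z$ in $\ell$ plus a double line, and one projects from the node $p$ to land in the picture just described.) To repair your proof, replace the genericity appeal by this Picard-rank-one argument; everything before and after it in your write-up can stand.
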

\begin{proof}
If not there exists a line $\ell\subset Z$ through $p$, and a 1-dimensional family $H_t$ of planes intersecting $Z$ in $l$ and a double line $2l_t$. Consider the projection from $p$, $\pi: \PP^5-\{p\}\to \PP^4$, which blows down the lines through $p$ to $S$. The planes $H_t$ give a family of lines in $\PP^4$ passing through a fixed point $q\in S$ and intersecting $S$ with multiplicity two at some other points $q_t$. Since $S=K\cap Q$, we see that $Q$ must contain all of these lines. But the lines of $Q$ passing through $q$ are parameterised by a conic and sweep out a 2-dimensional quadric cone $Q_0$ with vertex at $p$. This quadric $Q_0$ spans a $H\simeq \PP^3$, and the intersection $S\cap H$ contains $S\cap Q_0$ and thus has a non-reduced divisor as a component. However, this is impossible if $\Pic(S)=\ZZ H$.
\end{proof}

In particular, like in the case $t=2$, we have that the restriction $(2\tH-3 B)|_E=3L + 4H$ is ample on $E$ and one sees that the slope of the nef cone
$\alpha_{n,6}$ is strictly less than $\frac{4}{3}$. On the other hand, we compute that $(L+\frac{6}{5} H)^2\cdot(L+H)<0$, so in particular $\frac{4}{3}<\alpha_{n,6}<\frac{6}{5}$.

Finally, we can as before (see Proposition \ref{propramdiv} for a proof) use the map $\phi:\PP(\Omega^1_S)\to {\rm Gr}(2,5)$ to produce an explicit destabilising family of curves on $S$. Indeed, the curve $C\subset \PP(\Omega_S^1)$ of class $\phi^*\sigma_2$ corresponding to lines tangent to $S$ and meeting a general $\PP^1\subset \PP^4$, has intersection number $-6$ with $L$. Pushing such curves forward to $S$ gives a 5-dimensional family of destabilising curves in $|\O_S(3)|$.

\subsection{$d=8$, Intersections of three quadrics in $\PP^5$}\label{sectiondeg8}

Here $\nef(S^{[2]})=\mov(S^{[2]})=\langle\tH, \tH-2B\rangle$ and $\eff(S^{[2]})=\langle B, \tH-2B\rangle$. In this case we have a Lagrangian fibration $S^{[2]}\to\PP^2$ so we know both the cones from Theorem \ref{theonefcone} and Corollary \ref{proplagrangian}. The restriction of the extremal ray of the cones of $S^{[2]}$ to $E$ respectively is $(\tH-2B)_E=2L+2H$ and so we find $\alpha_{e,8}=\alpha_{n,8}=1$. 

The geometry of this fibration is well understood (see \cite[Proposition 7.1]{htlagrangian} and \cite[\S 2.1]{sawon}) and we include some details here for completeness. Consider $S$ as the intersection of three quadrics $Q_i\subset\PP^5$. Then we have a morphism $S^{[2]}\to (\PP^2)^\vee\simeq \PP^2$ defined by taking a $0$-cycle $\xi$ of degree two and giving the base $\PP^1$ of the pencil of quadrics (in the net spanned by the $Q_i$) containing the line in $\PP^5$ spanned by $\xi$. A fibre of this morphism is given by the abelian surface which is the Fano scheme of lines of the threefold which is the base locus of the corresponding pencil. In fact the general fibre will be the Jacobian of a genus two curve: a general pencil of quadrics in $\PP^5$ is singular at precisely six points and we take the genus two curve ramified above these six points. In particular, following the description from Theorem \ref{theonefcone}, it is the class of this curve that spans the Mori cone of $E$, which is also equal to the movable cone of curves of $E$.

\section{Final thoughts and questions}

\subsection{Bott vanishing for K3 surfaces}

The following question was raised by Totaro in \cite{totaro}.
\begin{question}
For which polarised K3 surfaces $(S,\O_S(1))$ does {\em Bott vanishing} hold? That is, for $q>0,p\ge 0$, is
$$
\H^q(S,\Omega^p\otimes \O_S(1))=0?
$$
\end{question}
Here one does not assume that $\O_S(1)$ is primitive. As explained in the above reference, by Kodaira vanishing, one need only treat the case $p=1$.
The problem is essentially solved in \cite{totaro}, but let us show how one can use our above results to easily obtain Bott vanishing for K3 surfaces $(S,\O_S(1))$ of Picard number one with $\O_S(1)^2>75$. Indeed, using the upper bound for the nef slope $\alpha_{n,2t}$ from Proposition \ref{nefbound}, we obtain that if $\O_S(1)^2>75$, the divisor $D=3L+H$ is ample on $E=\PP(\Omega^1_S)$. Since $K_E=-2L$, one finds that $$\H^1(S,\Omega^1_S\otimes \O_S(1))=\H^1(E,L+H)=\H^1(E,K_E+D)=0$$ by Kodaira vanishing.
In general Bott vanishing can fail; for example because $\chi(\Omega^1_S\otimes \O_S(1))$ is negative when $S$ has low degree.

\subsection{Questions}
As noted before, we are only able to compute the cones for certain degrees of K3s, leaving several open questions. For instance, we ask for $E=\PP(\Omega^1_S)$:

\begin{question}
    What is the second extremal ray of $\eff(E)$ for a degree 2 K3 surface, or more generally when $S^{[2]}$ admits a flop?
\end{question}

\begin{question}
    What is the second extremal ray of $\nef(E)$ in degree 4 and 6, or more generally when there is a second
    divisorial contraction $S^{[2]}\to Z$?
\end{question}
\noindent Even though the slopes for $S^{[2]}$ are all rational numbers, the situation for $\PP(\Omega^1_S)$ is less clear:

\begin{question}
    Let $S$ be a K3 surface with Picard number one. Are the cones $\eff(E)$ and $\nef(E)$ rational polyhedral?
\end{question}


\bibliographystyle{alpha}

\end{document}